\documentclass[reqno]{amsart}
\usepackage{graphicx,color}
\usepackage{amssymb,amsfonts,amsthm}
\usepackage[all,arc]{xy}
\usepackage{enumerate}
\usepackage{mathrsfs}
\newtheorem{thm}{Theorem}[section]

\newtheorem{lemma}[thm]{Lemma}
\newtheorem{prop}[thm]{Proposition}

\theoremstyle{definition}

\theoremstyle{remark}

\numberwithin{equation}{section}

\title[Asymptotic Plateau problem]{The Asymptotic Plateau Problem for \(p\)-Convex Hypersurfaces in Hyperbolic Space}

\author{Shujun Shi}

\address{School of Mathematical Sciences, Harbin Normal University,
Harbin 150025, Heilongjiang Province, China}
\email{shjshi@hrbnu.edu.cn}

\author{Zhenan Sui}

\address{Institute for Advanced Study in Mathematics of HIT, Harbin Institute of Technology, Harbin, China}
\email{suizhenan@126.com}

\begin{document}

\begin{abstract}
We solve the asymptotic Plateau problem in hyperbolic space for the curvature given by the normalized geometric mean of the $p$-fold sums of the principal curvatures. For every $n\ge3$, $2\le p\le n-1$, and $\sigma\in(0,1)$, each bounded domain $\Omega\subset\mathbb{R}^n$ with smooth mean-convex boundary admits a unique complete admissible vertical graph with prescribed curvature $\sigma$ and asymptotic boundary $\partial\Omega\times\{0\}$. The result extends the known cases $(n,p)=(3,2)$ and $(4,3)$ to the stated range. The key curvature estimate is uniform with respect to the positive boundary height in the approximating Dirichlet problems. The proof establishes two lower bounds for the full third-derivative quadratic form under the linearized constraints: a sharp universal bound with coefficient $2/p$ and a stronger direction-dependent bound when the relevant principal curvature lies in a specified range.
\end{abstract}

\maketitle

\medskip
\noindent\textbf{2020 Mathematics Subject Classification.}
Primary 53C42; Secondary 35J60, 35B45, 35J66.
\smallskip

\noindent\textbf{Keywords.}
Asymptotic Plateau problem; $p$-convex hypersurface; hyperbolic space;
curvature estimates; constrained concavity; fully nonlinear elliptic equations.

%\tableofcontents

\vspace{4mm}

\section{Introduction}
\label{sec:main}

We use the upper half-space model $\mathbb{H}^{n+1}=\{(x,x_{n+1})\in\mathbb{R}^{n+1}:x_{n+1}>0\}$ with metric $x_{n+1}^{-2}\sum_{i=1}^{n+1}dx_i^2$. Given $\sigma\in(0,1)$ and a union $\Gamma$ of disjoint smooth, closed, embedded $(n-1)$-dimensional submanifolds of $\partial_\infty\mathbb{H}^{n+1}=\mathbb{R}^n\times\{0\}$, we seek a smooth complete hypersurface $\Sigma$ satisfying
\begin{equation} \label{eq1-1}
f \big( \kappa[ \Sigma ] \big) = \sigma
\end{equation}
and having asymptotic boundary $\partial_\infty\Sigma=\Gamma$. Here $\kappa[\Sigma]=(\kappa_1,\ldots,\kappa_n)$ is its hyperbolic principal curvature vector, and $f$ is a smooth symmetric function of $n$ variables.

The minimal, constant-mean-curvature, and constant-Gauss-curvature versions of this problem have been studied extensively, beginning with work by Anderson \cite{Anderson1982,Anderson1983},  Hardt--Lin \cite{HL1987},  Lin \cite{Lin1989}, Labourie \cite{Labourie1991}, Rosenberg--Spruck \cite{RS1994}, Nelli--Spruck \cite{NS1996}, Tonegawa \cite{Tonegawa1996}, Guan--Spruck \cite{GS00}. Let $\Omega$ be a smooth bounded domain (a nonempty connected open set). For a positive vertical graph of $u$ over $\Omega$, the asymptotic boundary condition $\Gamma=\partial\Omega\times\{0\}$ becomes $u=0$ on $\partial\Omega$. The corresponding fully nonlinear equation degenerates there.

Guan--Spruck--Szapiel \cite{GSS09} and Guan--Spruck \cite{GS10} developed an approximation theory for a broad class of symmetric, elliptic, concave, homogeneous curvature functions by studying the Dirichlet problem at positive boundary height
\begin{equation} \label{eq1-4}
\left\{ \begin{aligned}
f( \kappa [ u ] ) = & \sigma \quad \text{ in } & \Omega, \\
u = & \epsilon \quad \text{ on } & \partial\Omega,
\end{aligned} \right.
\end{equation}
Here $\kappa[u]$ denotes the principal curvature vector of the graph of $u$, and $\epsilon>0$ is fixed and sufficiently small. The curvature bounds needed for the limit $\epsilon\downarrow0$ must be uniform in $\epsilon$.

For mean-convex domains, Guan--Spruck proved solvability for every $\sigma\in(0,1)$ and every sufficiently small $\epsilon>0$. Their global curvature estimate is uniform in $\epsilon$ under the additional restriction $\sigma>\sigma_0\approx0.3703$. Xiao \cite{Xiao2013} lowered the threshold to $0.14596<\sigma_0<0.14597$ in the curvature flow setting. Guan, Spruck and Xiao \cite{GS11,GSX14} obtained existence over the full interval $\sigma\in(0,1)$ for locally strictly convex hypersurfaces, assuming that the curvature function is defined on the positive cone and vanishes on its boundary.

Fix integers $n\ge3$ and $2\le p\le n-1$, and write
\begin{equation*}
q=n-p,\qquad N=\binom np,\qquad
\mathcal{I}_p = \{I\subset\{1,\ldots,n\}:|I|=p\}.
\end{equation*}
Unless otherwise stated, lower-case indices range from $1$ to $n$, and
subset indices $I,J$ range over $\mathcal{I}_p$. Summation signs are displayed explicitly, including in tensor contractions;
repeated indices alone do not imply summation. We use $I_n$ for the
$n\times n$ identity matrix, reserving $I,J$ for $p$-element subsets.
For $\kappa\in\mathbb{R}^n$, set
\begin{equation*}
H = \sum_i \kappa_i, \qquad \lambda_I = \sum_{i\in I} \kappa_i, \qquad
\mathcal{P}_p= \{\kappa: \lambda_I > 0 \text{ for all } I \in \mathcal{I}_p \}.
\end{equation*}
On $\mathcal{P}_p$, define $f: \mathcal{P}_p \to (0, \infty)$ by
\begin{equation}
f (\kappa) = \frac{1}{p} \bigg( \prod_{I \in \mathcal{I}_p} \lambda_I \bigg)^{1/N}.
\label{eq:operator}
\end{equation}
A hypersurface is called $p$-convex if its principal curvature vector belongs to $\mathcal{P}_p$. Thus $p$-convexity here means that every $p$-sum is positive. We also call such a hypersurface admissible. A positive graphing function is admissible if its graph is admissible. For a self-adjoint shape operator $A$, write
\[ F( A ) = f \big( \lambda(A) \big), \]
where $\lambda(A)$ denotes the eigenvalue vector of the shape operator $A$. Thus equation \eqref{eq1-1} can be written equivalently as
\[ F(A) = f \big( \lambda(A) \big) = \sigma. \]

The cone $\mathcal{P}_p$ is related to the $p$-positivity cones studied in \cite{HarveyLawson2013}. For the same product operator in Euclidean space, Dong \cite{Dong2023,Dong2024} proved curvature and Dirichlet estimates. Dong's estimates for general dependence on the normal require $p \geq n/2$. For data independent of the normal, Dong \cite[Theorem~1.5]{Dong2024} treats $1\le p\le n$ under its stated strict-convexity, monotonicity, and subsolution hypotheses. Related star-shaped problems were studied in space forms \cite{Lu-Zhong} and warped product manifolds \cite{YangLuWarped2026}. Gong--Tu \cite{GongTu2026} studied closed star-shaped hypersurfaces in Euclidean space for elementary symmetric functions of the $p$-sums; their results include the product endpoint for certain prescribed functions. See also \cite{CDH2023,CJ2021,Mei2023} for related Hessian estimates. Our hyperbolic asymptotic boundary problem includes $p<n/2$; the required estimate is uniform in the approximating boundary height $\epsilon$.

The operator $f$ and its admissible cone should be distinguished from the elementary symmetric curvature functions
\[ \sigma_k (\kappa) = \sum_{1 \leq i_1 < \ldots < i_k \leq n} \kappa_{i_1} \cdots \kappa_{i_k} \]
and their cones
\[\Gamma_k = \big\{ \kappa \in \mathbb{R}^n \big\vert \sigma_j (\kappa) > 0, \, j = 1, \ldots, k \big\}. \]
For the elementary symmetric curvatures, Lu \cite{Lu2023} and Wang \cite{WangB-Adv,WangB-arXiv} obtained asymptotic Plateau results for particular indices. Mei--Yan \cite{MeiYan2026} treat the full intermediate range $2\le k\le n-1$; see also \cite[Remark~1.5 and Appendix~A]{WangHigher2026}. Adjacent quotients are treated in \cite{WangB-MRL}; the erratum in \cite{WangB-arXiv} corrects the scope of other quotient results stated in earlier versions of that work.

For the operator \eqref{eq:operator}, Chen--Sui--Sun \cite[Theorem~1.10]{CSS2025} proved existence for all $\sigma\in(0,1)$ when $(n,p)=(3,2)$. Sui \cite[Theorem~1.9]{Sui2026} established the corresponding result when $(n,p)=(4,3)$. Chen--Sui--Sun also proved a general-dimensional result for the curvature functions
\[
\frac{\sigma_k^{1/k} \big( H I_n - A \big)}{(n-1) \binom nk^{1/k}},\qquad 1\le k<n,
\]
in \cite[Theorem~1.7]{CSS2025}. The product endpoint $k = n$ is excluded from that theorem. Here $\sigma_k$ of a symmetric matrix means the $k$th elementary symmetric function of its eigenvalues. Sui's constrained minimization formula for this endpoint holds in arbitrary dimension \cite[Theorem~4.22]{Sui2026}. Sui obtained the corresponding uniform global curvature estimate in dimension four. We estimate the full third-order quadratic form under its two linear constraints and obtain quantitative lower bounds for every $2\le p\le n-1$. We then use these bounds in one maximum-principle argument for every $n\ge3$, $2\le p\le n-1$, and $\sigma\in(0,1)$.

For a vertical graph $\Sigma=\{(x,u(x)):x\in\Omega\}$, let $\mathbf{n}$ be the upward hyperbolic unit normal and let $\mathbf{D}$ denote the ambient hyperbolic connection. We define the shape operator by $A(X) = - \mathbf{D}_X \mathbf{n}$. With this convention, horizontal horospheres have principal curvatures $1$. The vertical component of the Euclidean unit normal is
\begin{equation}
\nu^{n + 1} = (1 + |Du|^2)^{-1/2}.
\label{eq:angle-def}
\end{equation}
We write $D$ for Euclidean differentiation and $\nabla$ for the Levi--Civita connection
of the induced metric on $\Sigma$.
The quantity $H=\operatorname{tr}A$ is the unnormalized mean curvature.
In graph coordinates $u_i=D_i u$, while in a local tangent frame
$u_i=\nabla_{\tau_i}u$; the frame will always be specified when the latter
notation is used. Likewise, $h_{ijk}$ denotes the covariant derivative
$\nabla_{\tau_k}h(\tau_i,\tau_j)$ in such a frame.
The norm $|\cdot|$ is Euclidean for vectors and tensors on the base domain and is induced by the hypersurface metric for tangent vectors and tensors. In particular,
$|D^2 u|^2 = \sum_{i,j}(D_{ij}u)^2$.

All function-space norms are Euclidean. For integers $k\ge0$ and
$0<\alpha\le1$, the notation
$C^{k,\alpha}(\overline\Omega)$ means that the derivatives up to order
$k$ extend continuously to the closure and that the derivatives of
order $k$ are H\"older continuous there. In particular,
$C^{1,1}(\overline\Omega)$ means that the extended gradient is
Lipschitz on $\overline\Omega$. Convergence in
$C^\infty_{\mathrm{loc}}(\Omega)$ means uniform convergence of every
finite-order derivative on each compact subset of $\Omega$.

We call $\partial\Omega$ mean-convex when its Euclidean mean curvature is nonnegative, with spheres bounding balls assigned positive mean curvature. Our main existence result is as follows.

\begin{thm}[Asymptotic Plateau problem]
\label{thm:plateau}
Let $n,p$ be integers with $n\ge3$ and $2\le p\le n-1$, and let $\Omega\subset\mathbb{R}^n$ be a bounded domain with smooth mean-convex boundary. For each $\sigma\in(0,1)$, there is a unique positive admissible solution of
\begin{equation}
F(A[u])=\sigma\quad\text{in }\Omega,\qquad u=0\quad\text{on }\partial\Omega,
\label{eq:asymptotic-problem}
\end{equation}
with $F$ defined by \eqref{eq:operator}.
This solution satisfies
\begin{gather*}
u\in C^\infty(\Omega)\cap C^1(\overline\Omega),\qquad
u^2\in C^{1,1}(\overline\Omega),
\\
\nu^{n + 1} \geq \sigma \quad\text{in }\Omega,\qquad
\nu^{n + 1} = \sigma \quad\text{on } \partial\Omega,
\\
\sup_\Omega\max_i|\kappa_i[u]|+\sup_\Omega u|D^2u|\le C(n,p,\sigma,\Omega).
\end{gather*}
Its graph is complete and properly embedded, with asymptotic boundary $\partial\Omega\times\{0\}$.
\end{thm}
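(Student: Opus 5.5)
We follow the approximation scheme of Guan--Spruck \cite{GS10}. For each small $\epsilon>0$ we solve the Dirichlet problem \eqref{eq1-4}. The aim is to obtain estimates uniform in $\epsilon$ and let $\epsilon\downarrow0$.

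\emph{Structural properties of $f$.} First we check that $f$ in \eqref{eq:operator} satisfies the structural hypotheses of \cite{GSS09,GS10}. It is symmetric, smooth, and positive on the open convex symmetric cone $\mathcal{P}_p$. It is elliptic, since $\partial f/\partial\kappa_i=\frac fN\sum_{I\ni i}\lambda_I^{-1}>0$. It is concave, being the geometric mean of linear functions, and it is homogeneous of degree one. It vanishes on $\partial\mathcal{P}_p$. The normalization gives $f(1,\dots,1)=1$, and $\sum_i f_i\ge c>0$.

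\emph{Lower-order estimates.} Under these hypotheses, the barrier arguments of \cite{GS10} for mean-convex $\partial\Omega$ give, for every $\sigma\in(0,1)$ and small $\epsilon$:
\begin{itemize}
\item a subsolution, and hence $C^0$ bounds of the form $\epsilon\le u\le C$;
\item the gradient bound $\nu^{n+1}\ge\sigma-C\epsilon$, obtained by comparing $\nu^{n+1}$ with $\sigma$ through the identity $\nu^{n+1}=\sigma+u\,\cdot(\text{elliptic quantity})$;
\item boundary second-derivative estimates of the form $u|D^2u|\le C$ on $\partial\Omega$, uniform in $\epsilon$.
\end{itemize}
Existence for fixed $\epsilon$ then follows by continuity and degree arguments once global curvature bounds hold. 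Uniqueness follows from the comparison principle applied to admissible solutions, using a dilation argument near $\partial\Omega$ as in \cite{GS10}.

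\emph{Global curvature estimate.} The decisive step is a bound $\max_i\kappa_i\le C$ on $\Sigma$, independent of $\epsilon$. Following \cite{GS10,CSS2025,Sui2026}, we apply the maximum principle to
\[
\Phi=\log\kappa_{\max}-\log(\nu^{n+1}-a),\qquad 0<a<\sigma.
\]
Here we use that $\nu^{n+1}\ge\sigma-C\epsilon>a$. At an interior maximum we rotate so that $\kappa_1=\kappa_{\max}$ and differentiate the equation twice. This produces the third-order term
\[
-\sum F^{ij,kl}h_{ij1}h_{kl1}.
\]
It must absorb the bad gradient terms $\sum_i f_ih_{11i}^2/\kappa_1^2$ coming from $\log\kappa_1$, together with the terms coming from $\nu^{n+1}$.

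We use Sui's constrained minimization formula \cite[Theorem~4.22]{Sui2026}. It expresses the minimum of this quadratic form, subject to the two linear constraints that the differentiated equation and the critical-point condition impose on $(h_{ij1})$, in terms of explicit sums over the $\lambda_I$. From it we prove two inequalities.
\begin{itemize}
\item A universal bound: the concavity term dominates $\frac2p\sum_{i}f_ih_{11i}^2/\kappa_1$, up to the constrained direction.
\item A sharper, direction-dependent bound in the range where the relevant curvature $\kappa_i$ is comparable to $\kappa_1$.
\end{itemize}
We then split indices $i$ according to whether $\kappa_i$ lies in that range. The universal bound controls the remaining indices once $a$ is chosen suitably in terms of $\sigma$ and $p$. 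This yields $\kappa_1\le C(n,p,\sigma,\Omega)$. Since $\kappa\in\mathcal{P}_p$ and every $p$-sum is positive, we also get $\kappa_i\ge-(p-1)\kappa_1$, so all $|\kappa_i|$ are bounded.

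\emph{Main obstacle.} We expect the main difficulty to lie in the two lower bounds for the third-order form, especially in the case $p<n/2$. The plan is to diagonalize the constrained quadratic form on the span of the $\lambda_I^{-1}$-weights, use Cauchy--Schwarz over the subsets $I\ni1$ and $I\ni i$, and track the sharp constant $2/p$.

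\emph{Passage to the limit.} The curvature bounds give $u|D^2u|\le C$ uniformly in $\epsilon$. Evans--Krylov theory and Schauder theory give $C^\infty_{\mathrm{loc}}$ bounds on compact subsets. Letting $\epsilon\downarrow0$ produces a solution $u$ of \eqref{eq:asymptotic-problem}. The limit inherits:
\begin{itemize}
\item $\nu^{n+1}\ge\sigma$, with equality on $\partial\Omega$ from the boundary analysis;
\item $u^2\in C^{1,1}(\overline\Omega)$, because $D^2(u^2)=2Du\otimes Du+2uD^2u$;
\item $u\in C^1(\overline\Omega)$.
\end{itemize}
Completeness and properness follow because $u\to0$ at $\partial\Omega$ with $|Du|$ bounded, so the hyperbolic distance to $\partial\Omega\times\{0\}$ is infinite.
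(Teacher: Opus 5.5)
Your outer framework matches the paper's: Guan--Spruck approximation at height $\epsilon$, the $\epsilon$-uniform height, angle and boundary curvature bounds, passage to the limit, uniqueness and completeness. The gap is the step that carries the theorem, namely the $\epsilon$-uniform global curvature bound for \emph{every} $\sigma\in(0,1)$. You only announce it, and the mechanism you describe does not fit together.

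\textbf{The test function does not produce the constraints you need.} Your $\log\kappa_{\max}-\log(\nu^{n+1}-a)$ is the Guan--Spruck-type test function. As the introduction recalls, their $\epsilon$-uniform estimate holds only for $\sigma>\sigma_0\approx0.37$. ``Choosing $a$ suitably'' is exactly where such a threshold enters, and nothing in your sketch uses the product structure to get past it.

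Moreover, the constrained minimization you invoke (Sui's formula, and the paper's $Q_i$) concerns the vector $t^{(i)}=(h_{jji})_j$ under two constraints: $\sum_j t_j=H_i$ and $\sum_j S_j t_j=0$. The first constraint exists only because the paper uses $H/(\nu^{n+1})^\beta$. At its maximum the critical-point condition prescribes the \emph{sum} $\sum_j h_{jji}=\frac{\beta u_i}{u\,\nu^{n+1}}(\nu^{n+1}-\kappa_i)H$. After regrouping the off-diagonal terms $2\frac{S_i-S_j}{\kappa_j-\kappa_i}h_{jji}^2$ by the index $i$, the third-order term is bounded below by $\frac FN\sum_i Q_i$. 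With $\log\kappa_{\max}$, the critical point instead prescribes the single entries $h_{11i}$. That is a different quadratic problem. Neither the cited formula nor your vaguely stated bound ``$\frac2p\sum_i f_i h_{11i}^2/\kappa_1$ up to the constrained direction'' has been shown to apply to it.

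\textbf{The split of indices is in the wrong place.} The dangerous indices are those with $\kappa_i$ small or negative, where the gradient term $2\beta\frac{S_i}{S}\frac{u_i^2}{u^2}\frac{\kappa_i-\nu^{n+1}}{\nu^{n+1}}$ can be negative. For $\kappa_i$ comparable to the largest curvature this term has a good sign, so no sharper bound is needed there.

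In the paper, the universal $2/p$ bound alone is insufficient on the small indices. With $\beta=1+1/p$, completing the square leaves a remainder $\frac{p(p+1)}{p+2}\frac{S_i}{S}\frac{u_i^2}{u^2}$, which the positive term $(\beta-1)\sum_i S_i/S=1$ cannot absorb. This is why the directional bound $Q_i\ge\frac{2S_i}{H+q|\kappa_i|}(\sum_j t_j)^2$ is used precisely where $|\kappa_i|\le H/(3q)$. There it gives coefficient $3/2$ and a remainder $\frac{2(p+1)}{3p+1}<1$, using $\sum_i u_i^2/u^2=1-(\nu^{n+1})^2$.

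On the remaining indices the universal bound suffices once one knows $\kappa_i>\nu^{n+1}$. That fact comes from a preliminary step your plan lacks: the $2/p$ bound first yields $\sum_i\frac{S_i}{S}\kappa_i^2<p^2$, hence $\kappa_n>-\sqrt{np}$ because $S_n/S\ge p/n$.

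Without these two constrained lower bounds and this two-stage absorption, or a genuine substitute, your proposal does not prove the curvature estimate. So it does not reach the full range $\sigma\in(0,1)$, which is the content of the theorem.
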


To pass to the asymptotic limit in the Guan--Spruck approximation, we use the following global curvature estimate. The angle lower bound is an explicit hypothesis of this estimate; mean-convexity supplies it for the approximating solutions.
\begin{thm} \label{thm:main}
Let $n,p$ be integers with $n\ge3$ and $2\le p\le n-1$, and fix
$\sigma>0$. Let $\Omega\subset\mathbb R^n$ be a bounded smooth domain and
$u\in C^4(\Omega)\cap C^2(\overline\Omega)$ be positive on
$\overline\Omega$ and admissible. If its graph $\Sigma$ satisfies
\eqref{eq1-1} with $f$ defined by \eqref{eq:operator} and
$\nu^{n+1}\geq\sigma$ on $\Sigma$,
then
\begin{equation} \label{eq1-11}
\sup\limits_{\substack{X \in \Sigma \\ 1 \leq i \leq n}} \big| \kappa_i (X) \big| \leq C \bigg( 1 +   \max\limits_{\substack{X \in \partial\Sigma \\ 1 \leq i \leq n}} \big| \kappa_i (X) \big| \bigg),
\end{equation}
where $C=C(n,p,\sigma)$ is independent of $u$, $\Omega$, and the
positive boundary height. Here $\partial\Sigma$ is the finite boundary
of the positive graph, rather than its asymptotic boundary.
\end{thm}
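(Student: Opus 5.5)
We follow the Guan--Spruck maximum-principle scheme for global curvature bounds of hyperbolic graphs. Since $u$ is admissible, $H>0$. Because every $p$-sum is positive, each $\kappa_i$ is bounded below by $-(p-1)$ times the largest curvature. So it suffices to bound $\kappa_{\max}$ from above. On $\Sigma$ we consider the test function
\[
\Phi=\log\kappa_{\max}-\log\big(\nu^{n+1}-a\big),\qquad 0<a<\sigma,
\]
where $\nu^{n+1}$ is viewed as the function $\mathbf n\cdot e_{n+1}$ with respect to the hyperbolic metric. A possible modification is to add a term in $u$ or in the horospherical support function $u\nu^{n+1}$. The hypothesis $\nu^{n+1}\ge\sigma$ keeps the denominator bounded below by $\sigma-a>0$. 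Suppose $\Phi$ attains its maximum at an interior point $X_0$ where $\kappa_1=\kappa_{\max}$ is large. If $\kappa_1$ has multiplicity, we perturb in the standard way, so that we may treat $\kappa_1$ as smooth near $X_0$. We choose an orthonormal frame diagonalizing $h_{ij}$ at $X_0$.

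We use the standard hyperbolic identities. The Codazzi equation holds and the Gauss equation has sectional curvature $-1$. We also need the formula for $\sum_i F^{ii}(\nu^{n+1})_{ii}$, which produces the good term $\nu^{n+1}\sum_i F^{ii}\kappa_i^2$, together with terms of order $\sigma(1+\nu^{n+1}\sum_i F^{ii})$. By homogeneity, $\sum_i F^{ii}\kappa_i=\sigma$. Differentiating the equation twice along $e_1$ gives
\[
\sum_i F^{ii}h_{11ii}\ge -\sum_{i,j,k,l}F^{ij,kl}h_{ij1}h_{kl1}+\kappa_1\sum_i F^{ii}\kappa_i^2-C\kappa_1^2\sum_i F^{ii}\quad\text{(after commuting)}.
\]
The first-order condition at $X_0$ is $h_{11i}/\kappa_1=-(\nu^{n+1})_i/(\nu^{n+1}-a)$. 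Combining it with $0\ge\sum_i F^{ii}\Phi_{ii}$ gives the key inequality
\[
0\ge \frac{1}{\kappa_1}\Big(-\sum F^{ij,kl}h_{ij1}h_{kl1}+2\sum_{i\ge2}\frac{F^{ii}h_{11i}^2}{\kappa_1-\kappa_i}\Big)-\sum_i\frac{F^{ii}h_{11i}^2}{\kappa_1^2}+\frac{a\sum_iF^{ii}\kappa_i^2}{\nu^{n+1}-a}-C\sum_iF^{ii}.
\]
The main obstacle is controlling the negative term $\sum_i F^{ii}h_{11i}^2/\kappa_1^2$ with a constant independent of $\epsilon$. That independence is why no term with $u$ in a denominator is allowed.

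The third-derivative quadratic form is handled as follows. For $f=\frac1p(\prod\lambda_I)^{1/N}$ we have $\log f=\frac1N\sum_I\log\lambda_I-\log p$. Hence
\[
-\sum F^{ij,kl}h_{ij1}h_{kl1}=f\Big(\tfrac1N\sum_I\frac{(\sum_{i\in I}h_{ii1})^2}{\lambda_I^2}-\big(\tfrac1N\sum_I\tfrac{\sum_{i\in I}h_{ii1}}{\lambda_I}\big)^2\Big)+\text{off-diagonal terms}.
\]
The off-diagonal terms combine with the $\frac{2F^{ii}}{\kappa_1-\kappa_i}$ term. The quantity $\xi_i=h_{11i}$ is constrained in two ways. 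First, $\sum_iF^{ii}h_{ii1}=0$, which comes from differentiating the equation. Second, $h_{11i}$ is prescribed by the gradient condition above. Under these two linear constraints we aim to prove two lower bounds. The first is a universal bound: the full quadratic form is at least $\frac{2}{p}\sum_{i\ge2}F^{ii}h_{11i}^2/\kappa_1$ (possibly with a $(1+\delta)$ loss on the $i=1$ term). The second is stronger and direction-dependent. It applies when $\kappa_i$ lies in a range such as $\kappa_i\le -c\kappa_1$ or $\kappa_i\ge\theta\kappa_1$, where the relevant $\lambda_I$ containing both indices is small relative to $\kappa_1$. There the gain exceeds $1$ and absorbs $F^{ii}h_{11i}^2/\kappa_1^2$ completely.

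We then split the indices into two classes. For \emph{good} $i$, meaning those in the favourable range, the stronger bound applies. For the remaining $i$, the universal $2/p$ bound is used. The leftover from these indices is bounded via the gradient condition by
\[
C\,F^{ii}\,|\nabla\nu^{n+1}|^2/(\nu^{n+1}-a)^2\le C F^{ii}\kappa_i^2\,u^2\text{-free quantities}.
\]
This is because $(\nu^{n+1})_i$ is comparable to $\kappa_i$ times a bounded factor in hyperbolic normalization. The leftover is then absorbed by the good term $a\sum F^{ii}\kappa_i^2$, provided $a$ is chosen appropriately in terms of $\sigma,n,p$. For this absorption we also need $\sum F^{ii}\kappa_i^2\ge c\,\kappa_1\sum F^{ii}$ and $\sum F^{ii}\ge c$ for this $f$. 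These follow from concavity, from $\sum F^{ii}\kappa_i=\sigma$, and from the fact that $F^{11}\kappa_1^2$ dominates when $\kappa_1$ is large. Together they give $\kappa_1(X_0)\le C(n,p,\sigma)$ at an interior maximum. Otherwise the maximum lies on $\partial\Sigma$, which yields \eqref{eq1-11}. Since $\Phi$ is comparable to $\log\kappa_1$ up to constants depending only on $\sigma$ and $a$, the bound extends from $X_0$ to all of $\Sigma$.
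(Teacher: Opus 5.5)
There is a genuine gap, and it sits exactly where the Guan--Spruck-type test function $\kappa_{\max}/(\nu^{n+1}-a)$ fails for small $\sigma$.

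\textbf{The obstacle is misidentified.} The term you call the main obstacle is not actually present. The first-order condition gives $h_{11i}/\kappa_1=\nabla_i\nu^{n+1}/(\nu^{n+1}-a)$. Hence $-\sum_iF^{ii}h_{11i}^2/\kappa_1^2$ cancels exactly against the term $+\sum_iF^{ii}(\nabla_i\nu^{n+1})^2/(\nu^{n+1}-a)^2$, which comes from differentiating $-\log(\nu^{n+1}-a)$ twice and which you dropped.

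The real obstacle is the last term of \eqref{eq4-1}, which you also dropped; it is not of order $\sigma(1+\nu^{n+1}\sum F^{ii})$. It contributes $\frac{2}{\nu^{n+1}-a}\sum_if_i\frac{u_i^2}{u^2}(\kappa_i-\nu^{n+1})$, which is negative whenever $\kappa_i<\nu^{n+1}$. Add the off-diagonal gain $2f_ih_{11i}^2/(\kappa_1(\kappa_1-\kappa_i))$ and use \eqref{eq4-24}. Then a bounded index $i\ge2$ contributes $2f_i\frac{u_i^2}{u^2}\frac{(\nu^{n+1}-\kappa_i)(a-\kappa_i)}{(\nu^{n+1}-a)^2}+O(f_i/\kappa_1)$. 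This is negative for $a<\kappa_i<\nu^{n+1}$, and $f_i$ can be far larger than $\sigma\kappa_1$ when some $\lambda_I$ is small. The only zeroth-order good term is $\frac{a}{\nu^{n+1}-a}\sum_if_i(1+\kappa_i^2)$ with $a<\sigma$. Balancing the two forces $a$, hence $\sigma$, above a positive constant. This is essentially the mechanism behind the threshold $\sigma>\sigma_0$ recalled in the introduction.

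\textbf{The proposed remedy is false.} Your remedy $\sum F^{ii}\kappa_i^2\ge c\,\kappa_1\sum F^{ii}$ fails for this operator. Take $n=3$, $p=2$, $\kappa=(K,\delta,\delta)$ with $f(\kappa)=\sigma$. Then $\delta\approx4\sigma^3/K^2$, $\sum_iS_i\kappa_i^2\approx2K$ and $\sum_iS_i\approx K^2/(4\sigma^3)$, so even $\sum F^{ii}\kappa_i^2\ge c\sum F^{ii}$ fails. Your commutation display also has the wrong sign. The Gauss equation gives $\sum_if_ih_{11ii}=\sum_if_ih_{ii11}+\sigma\kappa_1^2-\kappa_1\sum_if_i\kappa_i^2-\kappa_1\sum_if_i+\sigma$, and an error $-C\kappa_1^2\sum F^{ii}$ could never be absorbed.

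\textbf{The two-constraint structure does not exist for $\kappa_{\max}$.} The gradient condition fixes $(h_{11i})_i$, while the differentiated equation constrains $(h_{ii1})_i$. Codazzi identifies $h_{ii1}$ with $h_{1ii}$, not with $h_{11i}$. The concave diagonal part of $-\sum F^{ij,kl}h_{ij1}h_{kl1}$ involves only the $h_{jj1}$. So for $i\ge2$ the only control of $h_{11i}$ is the off-diagonal coefficient $2f_i/(\kappa_1-\kappa_i)$. Your universal $2/p$ bound is just $\kappa_1-\kappa_i<p\kappa_1$. The claimed gain above $1$ when $\kappa_i\le-c\kappa_1$ has no source, since there $2\kappa_1/(\kappa_1-\kappa_i)\le2/(1+c)$.

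\textbf{The missing idea is the paper's test function $H/(\nu^{n+1})^\beta$.} With it, for each direction $i$ both constraints act on the same vector $t^{(i)}=(h_{jji})_j$: $\sum_jt_j=H_i$ is fixed by \eqref{eq4-6}, and $\sum_jS_jt_j=0$. The whole third-order term is at least $\frac{F}{N}\sum_iQ_i$. Propositions~\ref{prop:universal} and~\ref{prop:directional} then give $Q_i\ge\frac{S_i}{H}\max\{\frac2p,\frac{2H}{H+q|\kappa_i|}\}H_i^2$. The good zeroth-order term becomes $(\beta-1)\sum_iS_i/S=(\beta-1)p$, with no factor of $\sigma$. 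The cross and gradient terms are homogeneous in $(\kappa_i-\nu^{n+1})/\nu^{n+1}$.

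With $\beta=1+1/p$, the universal bound yields $\kappa_n>-\sqrt{np}$. Indices with $|\kappa_i|\ge H/(3q)$ then have $\kappa_i>\nu^{n+1}$, so their cross term is nonnegative. For the remaining indices, completing the square with coefficient $3/2$ leaves a total deficit of at most $\frac{2(p+1)}{3p+1}<1=(\beta-1)p$. That is why the paper needs no threshold on $\sigma$.
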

The constant in Theorem~\ref{thm:main} is uniform in the positive boundary height; it may depend on $\sigma$ as $\sigma\downarrow0$.
The proof uses the test function $\frac{H}{(\nu^{n + 1})^\beta}$ and the constrained analysis of third-order terms developed in \cite[Sections~3--4]{Sui2026}. At a maximum of the test function, the derivative of $H$ and the linearized curvature equation give two constraints on the third derivatives. We use both to obtain quantitative lower bounds for the associated quadratic form. Let $S_i = \sum_{I \ni i} \lambda_I^{-1}$. For each admissible $\kappa$, each $i$, and every $t=(t_1,\ldots,t_n)\in\mathbb R^n$ satisfying the constraint below, the full quadratic form $Q_i(\kappa,t)$ of \eqref{eq: Qi}, including the off-diagonal spectral terms, satisfies
\[
Q_i \geq \frac{S_i}{H}
\max\bigg\{\frac2p,\frac{2H}{H+(n-p)|\kappa_i|}\bigg\}
\bigg( \sum_j t_j \bigg)^2,
\qquad \sum_j S_j t_j=0.
\]
Proposition~\ref{prop:universal} shows that $2/p$ is sharp over the admissible cone and that equality at any fixed admissible curvature vector requires $t=0$. The direction-dependent coefficient is at least $3/2$ when $|\kappa_i|\le H/[3(n-p)]$. Related uses of third-order concavity appear in \cite{GRW2015,RW2019}.

At an interior maximum of $H/(\nu^{n+1})^\beta$ with $\beta=1+1/p$, the universal bound controls the negative principal curvatures. The directional bound controls the remaining terms in the maximum-principle inequality. The resulting curvature estimate, together with the Guan--Spruck approximation theory, yields the existence theorem.

The rest of this paper is organized as follows.
Section~\ref{sec:preliminary} records the graph equation, the structural properties of the operator, and geometric identities. Sections~\ref{sec:algebra}--\ref{sec:directional} establish the constrained quadratic estimates. Section~\ref{sec:first-absorption} proves the uniform global curvature estimate, finishing the proof of Theorems~\ref{thm:main} and Theorem~\ref{thm:plateau}.

\vspace{4mm}

\section{Preliminaries}
\label{sec:preliminary}

In this section, we collect some known facts.
The symbol $\delta_{ij}$ is the Kronecker delta,
and a superscript $T$ denotes transpose. Symmetric matrix inequalities
are understood in the sense of quadratic forms; positivity means
positive definiteness.

\vspace{2mm}

\subsection{Graph equation}
\label{sec:prelim}

For the vertical graph
\[ \Sigma = \Big\{ \big( x, u(x) \big) \big| x \in \Omega \Big\},  \]
where $\Omega$ is a smooth bounded domain in $\mathbb{R}^n \equiv \partial_{\infty} \mathbb{H}^{n + 1}$,
set $w=\sqrt{1+|Du|^2}$ and $\gamma=(\gamma^{ij})$, where
\[
\gamma^{ij}=\delta_{ij}-\frac{u_i u_j}{w(1+w)}.
\]
The induced metric and a symmetric matrix representing the shape operator are
\begin{equation}
g_{ij}=u^{-2}(\delta_{ij}+u_i u_j),\qquad
A[u] = \frac{1}{w} \Big( I_n + u \gamma D^2 u \gamma \Big).
\label{eq:graph-matrix}
\end{equation}
The eigenvalues of $A[u]$ are the hyperbolic principal curvatures, with the sign convention specified above.

The hyperbolic and Euclidean geometric quantities are related as follows.
\[ \mathbf{n} = u \nu, \]
where
\[\nu = \frac{( - D u, 1 )}{w}   \]
is the upward Euclidean unit normal vector field on $\Sigma$.
\begin{equation} \label{eq2-11}
h_{ij} = \frac{1}{u} \tilde{h}_{ij} + \frac{\nu^{n+1}}{u^2} \tilde{g}_{ij}
\end{equation}
and
\begin{equation} \label{eq2-12}
\kappa_i =  u \tilde{\kappa_i} + \nu^{n+1},  \quad i = 1, \ldots, n,
\end{equation}
where $\tilde g$ and $\tilde h$ are the Euclidean first and second fundamental forms, respectively, and $\tilde{\kappa} = (\tilde{\kappa}_1, \cdots, \tilde{\kappa}_n)$  are its Euclidean principal curvatures. Untilded geometric quantities refer to the hyperbolic induced metric and second fundamental form; see \cite{GSS09,GS10,GS11,GSX14} for these formulas.

\vspace{2mm}

\subsection{Structural properties}
\label{sec:structural}

Define
\[
G(r,\xi,z) = F \Bigg( \frac{1}{\sqrt{1+|\xi|^2}}
\Big( I_n + z \gamma(\xi) r \gamma(\xi) \Big) \Bigg).
\]
Here $r$ is a symmetric $n\times n$ matrix, $\xi\in\mathbb{R}^n$, $z>0$, and $\gamma(\xi)$ is obtained by replacing $Du$ by $\xi$ in the definition of $\gamma$. Since $F$ is elliptic and concave on its admissible matrix cone, $G$ is elliptic and concave in $r$ whenever $z>0$ and the matrix is admissible.

The exterior-algebra characterization of $p$-positivity, also used in \cite{GongTu2026}, expresses $F$ as a root of a determinant. For a symmetric matrix $A$, let $A^{[p]}$ denote the additive action
\[
A^{[p]}(v_1\wedge\cdots\wedge v_p)
=\sum_{j=1}^p v_1\wedge\cdots\wedge Av_j\wedge\cdots\wedge v_p.
\]
Its eigenvalues are the $p$-sums $\lambda_I$, and hence
\begin{equation}
F(A)=p^{-1}\det(A^{[p]})^{1/N},\qquad
\lambda(A)\in\mathcal{P}_p \Longleftrightarrow A^{[p]}>0.
\label{eq:additive-action}
\end{equation}
The admissible matrix cone is convex and is preserved by adding positive semidefinite matrices. Formula \eqref{eq:additive-action} shows that $F$ is smooth even at matrices with repeated eigenvalues. For general results on spectral differentiation, see \cite{Ball1984}.

The matrix function $F$ is defined on
\[
\mathcal K_p
=\{A\in \mbox{Sym}(n):A^{[p]}>0\}.
\]
Here $\mbox{Sym}(n)$ is the space of real symmetric
$n\times n$ matrices. Accordingly, the domain of the graph operator is
\[
\mathcal U_p
=\bigg\{(r,\xi,z):r\in\mbox{Sym}(n), \xi\in\mathbb{R}^n, z>0,
 \frac{I_n + z \gamma(\xi) r \gamma(\xi)}
{\sqrt{1+|\xi|^2}} \in\mathcal K_p \bigg\}.
\]

We write $\mathbf{1}=(1,\ldots,1)^T$ and $f_i=\partial f/\partial\kappa_i$. Also, we write
\[ F^{ij}(A) = \frac{\partial F}{\partial a_{ij}} (A), \quad F^{ij, kl} (A) = \frac{\partial^2 F}{\partial a_{ij} \partial a_{kl}} (A) . \]

The following lemma verifies that all structural conditions in Guan--Spruck \cite{GS10} are satisfied.
\begin{lemma}[Structural properties]
\label{lem:structure}
The cone $\mathcal{P}_p$ is open, symmetric and convex, contains the positive orthant, and is contained in $\{H>0\}$. The function $f$ is smooth, symmetric, positive, elliptic, concave and homogeneous of degree one on $\mathcal{P}_p$. It extends continuously to $\overline{\mathcal{P}_p}$, vanishes on $\partial\mathcal{P}_p$, and satisfies $f(\mathbf{1})=1$. Moreover,
\begin{equation}
f(\kappa) \leq H/n,\qquad \sum_i f_i(\kappa)\ge1.
\label{eq:structural-trace}
\end{equation}
Let $e_n$ denote the last coordinate unit vector. Then, uniformly for $\kappa$ in a sufficiently small fixed neighborhood of $\mathbf{1}$,
\begin{equation}
f(\kappa+Re_n)\longrightarrow\infty\quad\text{as }R\longrightarrow\infty.
\label{eq:ray-growth}
\end{equation}
\end{lemma}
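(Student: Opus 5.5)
The plan is to verify each property directly from the definition of $\mathcal{P}_p$ as an intersection of finitely many open half-spaces $\{\lambda_I>0\}$, and of $f$ as $p^{-1}$ times the geometric mean of the linear forms $\lambda_I$.

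\emph{The cone.} Openness and convexity are immediate, since $\mathcal{P}_p$ is a finite intersection of open half-spaces. Symmetry holds because a permutation of coordinates permutes $\mathcal{I}_p$. The positive orthant lies in $\mathcal{P}_p$ trivially. For $\mathcal{P}_p\subset\{H>0\}$ we use the identity
\[
\sum_I\lambda_I=\binom{n-1}{p-1}H,
\]
whose left side is positive on $\mathcal{P}_p$.

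\emph{The function $f$.} Smoothness, positivity and symmetry are clear on $\mathcal{P}_p$, and homogeneity of degree one holds since $f$ is a geometric mean of linear forms. Concavity follows because the geometric mean $(y_1\cdots y_N)^{1/N}$ is concave and nondecreasing on the positive orthant, and it is composed here with the linear map $\kappa\mapsto(\lambda_I)_I$. For ellipticity,
\[
f_i=\frac{f}{N}\sum_{I\ni i}\lambda_I^{-1}=\frac{f}{N}S_i>0.
\]
The function extends continuously to $\overline{\mathcal{P}_p}$ with all $\lambda_I\ge0$, and it vanishes on the boundary, where some $\lambda_I=0$. At $\mathbf{1}$ every $\lambda_I=p$, so $f(\mathbf{1})=1$.

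\emph{The inequalities \eqref{eq:structural-trace}.} By AM--GM,
\[
f\le\frac1{pN}\sum_I\lambda_I=\frac{\binom{n-1}{p-1}}{pN}H=\frac Hn.
\]
For the trace bound, $\sum_i f_i=\frac{f}{N}\sum_I p\lambda_I^{-1}$. By AM--GM (equivalently GM $\ge$ HM), $\frac1N\sum_I\lambda_I^{-1}\ge(\prod_I\lambda_I)^{-1/N}=(pf)^{-1}$. Hence $\sum_i f_i\ge\frac{f}{N}\cdot pN\cdot(pf)^{-1}=1$. This could alternatively be derived from concavity and homogeneity via $f(\mathbf 1)\le f(\kappa)+\sum_i f_i(1-\kappa_i)=\sum_i f_i$, which is the cleaner route and the one I would use.

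\emph{The ray growth \eqref{eq:ray-growth}.} This is the only step needing care, and even it is mild. Choose the neighborhood so that $\lambda_I(\kappa)\ge p/2$ for all $I$. Then for $R\ge0$, every $\lambda_I(\kappa+Re_n)$ is at least $p/2$. The subsets containing $n$, of which there are $\binom{n-1}{p-1}\ge1$, have $\lambda_I\ge p/2+R$. Therefore
\[
f(\kappa+Re_n)\ge\frac1p\Big(\frac p2\Big)^{1-\binom{n-1}{p-1}/N}\Big(\frac p2+R\Big)^{\binom{n-1}{p-1}/N}\to\infty
\]
uniformly in $\kappa$. The constraint $p\le n-1$ is not needed here; the bound uses only $p\ge1$.
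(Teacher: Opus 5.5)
Your proof is correct and follows the paper's argument step by step. You get the cone properties from the half-space description and $\sum_I\lambda_I=\binom{n-1}{p-1}H$, concavity by composing the geometric mean with a linear map, $\sum_i f_i\ge1$ by testing concavity at $\kappa$ against $\mathbf 1$, and ray growth from the $\binom{n-1}{p-1}$ factors containing $n$ growing linearly, which gives order $R^{p/n}$. The only differences are that you get $f\le H/n$ directly from AM--GM instead of from concavity at $\mathbf 1$ with symmetry and homogeneity, and you also offer a GM--HM route to the trace bound; both are valid and no longer than the paper's.
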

\begin{proof}
The stated properties of $\mathcal{P}_p$ follow from its definition and the identity
$\sum_I\lambda_I=\binom{n-1}{p-1}H$. The geometric mean is concave on the positive orthant.
Composing the geometric mean with the linear map $\kappa\mapsto(\lambda_I)_I$ proves that $f$ is concave. At the matrix level, \eqref{eq:additive-action} gives the same conclusion: $A\mapsto A^{[p]}$ is linear and $B\mapsto\det(B)^{1/N}$ is concave on positive definite $N\times N$ matrices. Differentiation gives
$f_i=(f/N)\sum_{I\ni i}\lambda_I^{-1}>0$.
The remaining basic properties follow directly from the product formula. Concavity at $\mathbf{1}$, symmetry and homogeneity imply $f\le H/n$; concavity at $\kappa$, applied to $\mathbf{1}$, gives
$1\le f(\kappa)+\sum_i f_i(\kappa)(1-\kappa_i)=\sum_i f_i(\kappa)$.
Finally, exactly $\binom{n-1}{p-1}$ factors contain the index $n$. Near $\mathbf{1}$, all other factors are uniformly bounded away from zero and the factors indexed by sets containing $n$ grow linearly in $R$. Thus $f(\kappa+Re_n)\ge cR^{p/n}$ for $R\ge1$, proving \eqref{eq:ray-growth}.
\end{proof}

\vspace{2mm}

\subsection{Geometric identities on hypersurface $\Sigma$}

Let $\tilde{\nabla}$ be the Levi-Civita connection induced from the Euclidean ambient space, and the other notations are as mentioned before. We have the following identities, whose proof can be found in \cite{GSS09, GS10, GS11, GSX14, Sui2019}.

\begin{lemma}  \label{Lemma1}
In any local frame on $\Sigma \subset \mathbb{R}^{n+1}$, the following identities hold.
\begin{equation} \label{eq2-2}
\tilde{g}^{kl} u_k u_l  = |\tilde\nabla u|^2 = 1 - (\nu^{n + 1})^2,
\end{equation}
\begin{equation}  \label{eq2-3}
\tilde{\nabla}_{ij} u = \tilde{h}_{ij} \nu^{n+1} \quad \mbox{and} \quad \tilde{\nabla}_{ij} x_{k} = \tilde{h}_{ij} \nu^{k}, \quad k = 1, \ldots, n,
\end{equation}
\begin{equation}  \label{eq2-4}
(\nu^{n+1})_i = - \tilde{h}_{ij} \tilde{g}^{j k} u_k,
\end{equation}
\begin{equation} \label{eq2-5}
\tilde{\nabla}_{ij} \nu^{n+1} = - \tilde{g}^{kl} ( \nu^{n+1} \tilde{h}_{il} \tilde{h}_{kj} + u_l \tilde{\nabla}_k \tilde{h}_{ij} ) .
\end{equation}
\end{lemma}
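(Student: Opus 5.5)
Lemma~\ref{Lemma1} collects standard identities for a hypersurface in Euclidean space. We verify each one in a local frame, treating $u$ as the restriction of the height coordinate $x_{n+1}$ to $\Sigma$ and $x_k$ as the restriction of the other coordinates. The common tool is the Gauss formula in $\mathbb R^{n+1}$: for the position vector $X$, the tangential Hessian satisfies $\tilde\nabla_{ij}X=\tilde h_{ij}\nu$, since $D_{\tau_i}\tau_j=\tilde\nabla_{\tau_i}\tau_j+\tilde h_{ij}\nu$. We also use the Weingarten formula $D_{\tau_i}\nu=-\tilde h_{ij}\tilde g^{jk}\tau_k$. Note that with $\nu$ upward and $\tilde h_{ij}=\langle D_{\tau_i}\tau_j,\nu\rangle$, these signs must be checked to be consistent with \eqref{eq2-11}.

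\emph{Identity \eqref{eq2-2}.} The Euclidean gradient of $x_{n+1}$ on $\mathbb R^{n+1}$ is $e_{n+1}$. Its tangential part is $\tilde\nabla u=e_{n+1}-\nu^{n+1}\nu$, whose squared norm is $1-(\nu^{n+1})^2$.

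\emph{Identity \eqref{eq2-3}.} Take the $(n+1)$-th and $k$-th components of $\tilde\nabla_{ij}X=\tilde h_{ij}\nu$.

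\emph{Identity \eqref{eq2-4}.} Differentiate $\nu^{n+1}=\langle\nu,e_{n+1}\rangle$ and apply Weingarten. This gives $(\nu^{n+1})_i=-\tilde h_{ij}\tilde g^{jk}\langle\tau_k,e_{n+1}\rangle=-\tilde h_{ij}\tilde g^{jk}u_k$.

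\emph{Identity \eqref{eq2-5}.} Take a further covariant derivative of \eqref{eq2-4}:
\[
\tilde\nabla_{j}(\nu^{n+1})_i=-\sum_{k,l}\tilde g^{kl}\big(\tilde\nabla_j\tilde h_{il}\,u_k+\tilde h_{il}\,\tilde\nabla_{jk}u\big).
\]
Substitute $\tilde\nabla_{jk}u=\tilde h_{jk}\nu^{n+1}$ from \eqref{eq2-3}. Then use the Codazzi equation, valid because the ambient space is flat, to write $\tilde\nabla_j\tilde h_{il}=\tilde\nabla_l\tilde h_{ij}$. Relabelling the indices gives \eqref{eq2-5}.

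The only delicate point is keeping the sign conventions for $\tilde h$ and $\nu$ consistent throughout. Since all four identities are frame-independent tensor identities, it suffices to verify them at a point in a normal frame, which removes the Christoffel terms.
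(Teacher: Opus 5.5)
Your proposal is correct and is the standard derivation. The paper gives no argument of its own and defers to the Guan--Spruck papers and Sui's work, which obtain these identities as you do: from $\tilde\nabla_{ij}X=\tilde h_{ij}\nu$, the Weingarten formula, and the Codazzi equation in flat $\mathbb{R}^{n+1}$. The sign check you flag also goes through: with $\nu$ upward and $\tilde h_{ij}=\langle D_{\tau_i}\tau_j,\nu\rangle$, one gets $\tilde h_{ij}=u_{ij}/w$ in graph coordinates, which is the convention underlying \eqref{eq:graph-matrix} and \eqref{eq2-12}.
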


\begin{lemma}  \label{Lemma6}
Let $\Sigma$ be a smooth hypersurface in $\mathbb{H}^{n+1}$ satisfying \eqref{eq1-1}. In a local orthonormal frame on $\Sigma$, we have the identity
\begin{equation}  \label{eq4-1}
\begin{aligned}
F^{ij} \nabla_{ij} \nu^{n+1}
= &  \Big( 1 + (\nu^{n+1})^2 \Big) \sigma - \nu^{n+1} \Big( \sum f_i +  \sum f_i \kappa_i^2 \Big) \\ & + \frac{2}{u^2} F^{ij} u_i u_j \big( \nu^{n + 1} - \kappa_j \big).
\end{aligned}
\end{equation}
\end{lemma}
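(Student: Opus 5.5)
The identity in Lemma~\ref{Lemma6} is pointwise and purely local, so I would prove it by a direct computation. The plan is to compute the Euclidean Hessian of $\nu^{n+1}$ with Lemma~\ref{Lemma1}, transfer it to the hyperbolic Hessian by the conformal change $g=u^{-2}\tilde g$, and convert all Euclidean second fundamental form quantities to hyperbolic ones via \eqref{eq2-11}. Throughout I fix a point $X\in\Sigma$ and work in a frame that is $\tilde g$-orthonormal up to the factor $u$, chosen so that it is $g$-orthonormal and diagonalizes $h$ (hence $\tilde h$) at $X$. I also rotate the frame so that $F^{ij}$ is diagonal at $X$, with $F^{ii}=f_i$.

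\textbf{Step 1 (conformal change).} Since $g=u^{-2}\tilde g$, the Christoffel symbols satisfy
\[
\Gamma^k_{ij}=\tilde\Gamma^k_{ij}-\frac1u\Big(u_i\delta_{jk}+u_j\delta_{ik}-\tilde g_{ij}\tilde g^{kl}u_l\Big).
\]
Hence, for any function $\varphi$,
\[
\nabla_{ij}\varphi=\tilde\nabla_{ij}\varphi+\frac1u\Big(u_i\varphi_j+u_j\varphi_i-\tilde g_{ij}\tilde g^{kl}u_k\varphi_l\Big).
\]
I apply this with $\varphi=\nu^{n+1}$. I substitute \eqref{eq2-5} for $\tilde\nabla_{ij}\nu^{n+1}$ and \eqref{eq2-4} for the first derivatives. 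I then replace $\tilde h_{ij}$ by $u h_{ij}-\nu^{n+1}\tilde g_{ij}/u$ using \eqref{eq2-11}, and pass between $\tilde g$ and $g$ with the factor $u^2$. In the chosen frame this makes $\tilde h$ diagonal with entries $(\kappa_i-\nu^{n+1})/u$ after normalization. The first-order terms then produce expressions of the form $u^{-2}u_iu_j(\kappa_j-\nu^{n+1})$ multiplied by numerical factors, and the quadratic term $\nu^{n+1}\tilde h\tilde h$ produces $\nu^{n+1}(\kappa_i-\nu^{n+1})^2$ on the diagonal. The $\nu^{n+1}\tilde h\tilde h$ contribution uses \eqref{eq2-2}, $\sum_k u_k^2/u^2=1-(\nu^{n+1})^2$ in hyperbolic-orthonormal components, to simplify.

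\textbf{Step 2 (the third-order term).} The term $-\tilde g^{kl}u_l\tilde\nabla_k\tilde h_{ij}$ must be eliminated. I would express $\tilde\nabla_k\tilde h_{ij}$ through $\nabla_k h_{ij}$ by differentiating \eqref{eq2-11} and applying the same Christoffel correction to the $2$-tensor. This yields $h_{ijk}$ plus lower-order terms that involve $u_k$, $h$ and $\nu^{n+1}$; the $\tilde g_{ij}$ part contributes $(\nu^{n+1})_k$, which is again handled by \eqref{eq2-4}. Contracting with $F^{ij}$, the genuine third-order term becomes $\sum_k u_k\,F^{ij}h_{ijk}$. This vanishes because differentiating \eqref{eq1-1} gives $F^{ij}h_{ijk}=0$.

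\textbf{Step 3 (contraction).} I contract everything with $F^{ij}$ and use three facts:
\begin{itemize}
\item homogeneity gives $F^{ij}h_{ij}=\sum f_i\kappa_i=\sigma$;
\item $F^{ij}h_{ik}h_{kj}=\sum f_i\kappa_i^2$;
\item the trace is $\sum f_i$.
\end{itemize}
Expanding $\sum f_i(\kappa_i-\nu^{n+1})^2$ and collecting terms, the gradient terms combine with $|\nabla u|^2$ via \eqref{eq2-2}, giving the coefficient $(1+(\nu^{n+1})^2)\sigma$. What remains is $-\nu^{n+1}(\sum f_i+\sum f_i\kappa_i^2)$ together with $\frac{2}{u^2}F^{ij}u_iu_j(\nu^{n+1}-\kappa_j)$.

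\textbf{Main obstacle.} The hardest part is the bookkeeping in Step 2. The conversion of $\tilde\nabla\tilde h$ to $\nabla h$ produces several first-order terms whose coefficients must combine exactly with those from Step 1 to give the factor $2$ and the sign $(\nu^{n+1}-\kappa_j)$. To control this I would verify the final coefficients on a horosphere, where $\kappa\equiv1$, and on a totally geodesic hemisphere before trusting the general computation. Alternatively, I would follow the computation in \cite{GS10,Sui2019}, which this lemma reproduces.
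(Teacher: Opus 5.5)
Your proposal is correct. The paper gives no proof of this identity; it only cites \cite{GSS09,GS10,GS11,GSX14,Sui2019}. Your conformal-change computation therefore supplies a self-contained derivation where the paper relies on the literature.

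The bookkeeping you flag as the main obstacle is cleaner than you expect. Work at $X$ with $g_{ij}=\delta_{ij}$ and $h_{ij}=\kappa_i\delta_{ij}$. Differentiate $\tilde h=u(h-\nu^{n+1}g)$, apply the Christoffel correction, and use \eqref{eq2-4}. This gives
\[
\tilde\nabla_k\tilde h_{ij}=u\,h_{ijk}-u_k(\kappa_i-\kappa_k)\delta_{ij}-u_i(\kappa_j-\kappa_i)\delta_{jk}-u_j(\kappa_i-\kappa_j)\delta_{ik}.
\]
After contraction with $u_k$, the two $u_iu_j$ terms cancel identically. So the factor $2$ and the sign $(\nu^{n+1}-\kappa_j)$ come entirely from the Step 1 correction $\frac1u(u_i\varphi_j+u_j\varphi_i)$.

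The identity \eqref{eq2-2} is not used on the $\nu^{n+1}\tilde h\tilde h$ term. It enters through the two trace terms. One is $-\delta_{ij}u^{-2}\sum_k u_k^2(\nu^{n+1}-\kappa_k)$ from Step 1, and the other is $+\delta_{ij}u^{-2}\sum_k u_k^2(\kappa_i-\kappa_k)$ from Step 2. They add to $\delta_{ij}(\kappa_i-\nu^{n+1})(1-(\nu^{n+1})^2)$. Combined with $-\nu^{n+1}(\kappa_i-\nu^{n+1})^2\delta_{ij}$, this yields
\[
\nabla_{ij}\nu^{n+1}=\big[\kappa_i(1+(\nu^{n+1})^2)-\nu^{n+1}(1+\kappa_i^2)\big]\delta_{ij}-\frac1u\sum_k u_kh_{ijk}+\frac{u_iu_j}{u^2}\big(2\nu^{n+1}-\kappa_i-\kappa_j\big).
\]
Your Step 3 contraction then gives \eqref{eq4-1} exactly. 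This intermediate formula also passes both of your sanity checks, the horosphere and the totally geodesic hemisphere with $\nu^{n+1}=u/R$.
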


\vspace{2mm}

\subsection{Cone identities}

Throughout the following sections, the integers $n \geq 3$ and $2 \leq p \leq n - 1$ are fixed, and $\kappa \in \mathcal{P}_p$. For convenience, we define
\begin{equation} \label{eq:weights}
\begin{aligned}
S = \sum_{I \in \mathcal{I}_p} \frac{1}{\lambda_I}, \qquad
S_i = \sum_{I \ni i} \frac{1}{\lambda_I}.
\end{aligned}
\end{equation}

\begin{lemma}[Elementary identities]
\label{lem:elementary}
On $\mathcal{P}_p$ one has
\begin{gather}
H > 0,\qquad - \frac{p-1}{q} H < \kappa_i < H,
\label{eq:cone-bounds}\\
0 < \frac{S_i}{S} < 1, \qquad  \sum_i \frac{S_i}{S} = p,\qquad
\sum_i S_i \kappa_i = N,
\label{eq:weight-sums}\\
S_i - S_j = (\kappa_j - \kappa_i) \sum_{\substack{K \subset \{1, \ldots, n\} \setminus \{i,j\} \\ |K| = p - 1}}
\frac{1}{\lambda_{K \cup \{i\}} \lambda_{K \cup \{j\}}}  \qquad  i \neq j .
\label{eq:a-difference}
\end{gather}
Moreover,
\[ f_i := \frac{\partial f}{\partial \kappa_i} = \frac{f}{N} S_i > 0, \qquad \sum f_i = \frac{p f}{N} S.  \]
If $\kappa_1 \ge \cdots \ge \kappa_n$, then $S_1 \le \cdots \le S_n$ and $\frac{S_n}{S} \ge \frac{p}{n}$.
\end{lemma}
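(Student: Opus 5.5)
Each identity follows by counting how many $p$-subsets contain a given index or pair of indices, together with the definition of $\mathcal{P}_p$; I would verify them in the order stated.

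\emph{Step 1: bounds \eqref{eq:cone-bounds}.} Positivity of $H$ comes from $\sum_I\lambda_I=\binom{n-1}{p-1}H$.

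For the upper bound on $\kappa_i$, use $q=n-p\ge1$ to pick a $p$-set $I$ with $i\notin I$. Summing $\lambda_I>0$ over all such $I$ gives $\binom{n-2}{p-1}(H-\kappa_i)>0$, so $\kappa_i<H$.

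For the lower bound, sum $\lambda_I$ over all $I\ni i$. This gives
\[
\binom{n-1}{p-1}\kappa_i+\binom{n-2}{p-2}(H-\kappa_i)>0.
\]
The coefficient of $\kappa_i$ is
\[
\binom{n-1}{p-1}-\binom{n-2}{p-2}=\binom{n-2}{p-1},
\]
and the ratio $\binom{n-2}{p-2}/\binom{n-2}{p-1}$ equals $(p-1)/(n-p)$. Dividing through, I get $\kappa_i>-\frac{p-1}{q}(H-\kappa_i)$, that is $\kappa_i\bigl(1-\frac{p-1}{q}\bigr)>-\frac{p-1}{q}H$. This is not quite $\kappa_i>-\frac{p-1}{q}H$.

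To close the gap, split on the sign of $\kappa_i$. If $\kappa_i\ge0$ there is nothing to prove, since $H>0$. If $\kappa_i<0$, I would reuse the first inequality in the form $\binom{n-1}{p-1}\kappa_i+\binom{n-2}{p-2}(H-\kappa_i)>0$, which rearranges to $\binom{n-2}{p-1}\kappa_i>-\binom{n-2}{p-2}H$, i.e.\ $\kappa_i>-\frac{p-1}{q}H$.

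\emph{Step 2: the weight identities \eqref{eq:weight-sums}.} Positivity of $S_i$ is immediate. For $S_i<S$, I need some $I\not\ni i$, which exists because $p\le n-1$. Since each $I$ contains exactly $p$ indices, $\sum_iS_i=pS$. Finally,
\[
\sum_iS_i\kappa_i=\sum_I\lambda_I^{-1}\sum_{i\in I}\kappa_i=\sum_I1=N.
\]

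\emph{Step 3: the difference formula \eqref{eq:a-difference}.} Subsets containing both $i$ and $j$ cancel in $S_i-S_j$. The remaining subsets are paired via $K\cup\{i\}\leftrightarrow K\cup\{j\}$ with $K\subset\{1,\dots,n\}\setminus\{i,j\}$ and $|K|=p-1$. For each pair,
\[
\frac1{\lambda_{K\cup\{i\}}}-\frac1{\lambda_{K\cup\{j\}}}=\frac{\kappa_j-\kappa_i}{\lambda_{K\cup\{i\}}\lambda_{K\cup\{j\}}}.
\]

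\emph{Step 4: the derivative formulas.} Differentiating $\log f=-\log p+N^{-1}\sum_I\log\lambda_I$ gives $f_i=(f/N)S_i$. Summing over $i$ and using Step 2 gives $\sum_if_i=pfS/N$.

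\emph{Step 5: the ordering statement.} For $i<j$ we have $\kappa_j-\kappa_i\le0$, and every denominator in \eqref{eq:a-difference} is positive. Hence $S_i\le S_j$, so $S_n$ is the largest of the $S_i$. Therefore $S_n\ge\frac1n\sum_iS_i=\frac pnS$.

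The only delicate point is the algebra in Step 1, where the lower bound $-\frac{p-1}{q}H$ must come out exactly. The sign split resolves it; everything else is direct counting.
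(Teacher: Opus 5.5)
Your proof is correct and follows the paper's argument; the paper phrases the same counting through the averages of the $p$-sums containing and not containing $i$. The sign split in Step 1 is unnecessary: the identity $\binom{n-1}{p-1}\kappa_i+\binom{n-2}{p-2}(H-\kappa_i)=\binom{n-2}{p-1}\kappa_i+\binom{n-2}{p-2}H$ gives $\kappa_i>-\frac{p-1}{q}H$ for every $\kappa_i$. The apparent gap came only from keeping $H-\kappa_i$ after you had already absorbed the term $-\binom{n-2}{p-2}\kappa_i$ into the coefficient $\binom{n-2}{p-1}$.
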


\begin{proof}
Summing all positive $p$-sums gives
\[
\sum_{I \in \mathcal{I}_p} \lambda_I = \binom{n-1}{p-1} H > 0. \label{eq: S}
\]
The average of the $p$-sums containing $i$ equals
\[
\kappa_i + \frac{p-1}{n-1}(H - \kappa_i) > 0,
\]
which gives the lower bound in \eqref{eq:cone-bounds}. The average of the $p$-sums not containing $i$ is $p (H - \kappa_i)/(n-1) > 0$, giving the upper bound in the same formula.

Also
\[ \sum_i S_i = \sum_i  \sum_{I \ni i} \frac{1}{\lambda_I} =  \sum_{I \in \mathcal{I}_p} \frac{1}{\lambda_I} \sum_{i \in  I} 1 = p S, \]
and
\[
\sum_i S_i \kappa_i =  \sum_i  \sum_{I \ni i} \frac{\kappa_i}{\lambda_I}
= \sum_{I \in \mathcal{I}_p} \frac{\sum_{i \in I} \kappa_i}{\lambda_I} = N.
\]
To prove \eqref{eq:a-difference}, we note that
\[ \begin{aligned}
S_i - S_j = & \sum_{I \ni i} \frac{1}{\lambda_I} -  \sum_{I \ni j} \frac{1}{\lambda_I} \\
= & \sum_{\substack{K \subset \{1, \ldots, n\} \setminus \{i,j\} \\ |K| = p - 1}} \Bigg( \frac{1}{\lambda_{K \cup \{i\}}} - \frac{1}{\lambda_{K \cup \{j\}}} \Bigg) \\
= & \sum_{\substack{K \subset \{1, \ldots, n\} \setminus \{i,j\} \\ |K| = p - 1}} \frac{\kappa_j - \kappa_i}{\lambda_{K \cup\{i\}} \lambda_{K \cup \{j\}}}.
\end{aligned} \]
The derivative formula follows by differentiating $\log f$. The ordering and the bound for the largest weight follow from \eqref{eq:a-difference} and $\sum_i \frac{S_i}{S} = p$.
\end{proof}

\vspace{4mm}

\section{Setup for uniform global curvature estimate}
\label{sec:algebra}

We work under the hypotheses of Theorem~\ref{thm:main}, including the assumed angle bound $\nu^{n+1}\geq\sigma$. For the approximating Dirichlet solutions, mean-convexity of $\partial\Omega$ gives this bound by \cite{GS10}.
We choose the test function
\begin{equation*}
\frac{H}{({\nu}^{n+1})^{\beta}}.
\end{equation*}
Here $\beta>1$ will be chosen later. Suppose the test function attains its maximum at an interior point $X_0$; if its maximum is attained on $\partial\Sigma$, the boundary term in \eqref{eq1-11} gives the desired bound.
Let $\tau_1, \ldots, \tau_n$ be a smooth local orthonormal frame field about
$X_0$ such that $h_{ij}(X_0) = \kappa_i \delta_{ij}$, where
$\kappa_1 \geq \cdots \geq \kappa_n$ are the hyperbolic principal curvatures of
$\Sigma$ at $X_0$.
Write $H_i:=\nabla_{\tau_i}H=\sum_j h_{jji}$.
At $X_0$, the function $\log H-\beta\log\nu^{n+1}$ has a local maximum. Its first and second derivatives therefore satisfy
\begin{equation} \label{eq4-2}
\frac{\sum_j h_{jji}}{H} - \frac{\beta \nabla_i \nu^{n + 1}}{\nu^{n + 1}} = 0,
\end{equation}
and
\begin{equation} \label{eq4-3}
\frac{\sum_j h_{jjii}}{H} - \frac{\beta \nabla_{ii} \nu^{n + 1}}{\nu^{n + 1}} - \frac{\beta (\beta - 1)( \nabla_i \nu^{n + 1} )^2}{(\nu^{n + 1})^2} \leq 0.
\end{equation}
Unless otherwise stated, all quantities in the following calculations are evaluated at $X_0$.

Differentiating equation \eqref{eq1-1} twice yields
\begin{equation} \label{eq4-23}
\sum_i F^{ii} h_{iij} = 0,
\end{equation}
and
\begin{equation}  \label{eq4-4}
\sum_i F^{ii} h_{iijj} + \sum_{k l r s} F^{k l, rs} h_{klj} h_{rsj} = 0 .
\end{equation}
Moreover, by Gauss equation we know that
\begin{equation} \label{eq2G-4}
h_{iijj} = h_{jjii} + ( \kappa_i \kappa_j - 1 )( \kappa_i -
\kappa_j ).
\end{equation}

By \eqref{eq2-4} and \eqref{eq2-11}, we have
\begin{equation} \label{eq4-24}
(\nu^{n + 1})_i = \frac{u_i}{u} (\nu^{n + 1} - \kappa_i).
\end{equation}
Substituting \eqref{eq4-24} into \eqref{eq4-2} yields
\begin{equation} \label{eq4-6}
\sum_j h_{jji} = \frac{\beta}{\nu^{n + 1}} \frac{u_i}{u} (\nu^{n+1} - \kappa_i) H.
\end{equation}

Combining \eqref{eq4-3}, \eqref{eq4-4}, \eqref{eq2G-4}, \eqref{eq4-1}  and  \eqref{eq4-24}, and for $\kappa_1$ sufficiently large, we have
\begin{equation} \label{eq4-8}
\begin{aligned}
& (\beta - 1) H \bigg( \sum f_i + \sum f_i \kappa_i^2 \bigg)  - \sum_{klrsj} F^{kl, rs} h_{klj} h_{rsj}
\\ &
+ \frac{2 \beta}{\nu^{n + 1}} H \sum f_i \frac{u_i^2}{u^2} \big( \kappa_i - \nu^{n+1} \big) - \frac{\beta (\beta - 1)}{(\nu^{n + 1})^2} H \sum f_i  \frac{u_i^2}{u^2} (\nu^{n + 1} - \kappa_i)^2 < 0.
\end{aligned}
\end{equation}

\vspace{2mm}

\subsection{The constrained quadratic form}

Recall that
\begin{equation} \label{eq3-1}
F( A ) = \frac{1}{p} \mbox{det}^{\frac{1}{N}} \Big( A^{[p]} \Big) =  \frac{1}{p} \bigg( \prod_{I \in \mathcal{I}_p} \lambda_I \bigg)^{\frac{1}{N}}.
\end{equation}
Then we have
\begin{equation} \label{eq: ln-equation}
\ln F( A ) = {\frac{1}{N}} \sum_{I \in \mathcal{I}_p} \ln \lambda_I - \ln p.
\end{equation}
Taking derivatives of \eqref{eq: ln-equation} yields,
\[ \frac{F^{kl}}{F} =  {\frac{1}{N}} \sum_{I \in \mathcal{I}_p}  \lambda_I^{-1} \frac{\partial \lambda_I}{\partial a_{kl}} , \]
and
\[ \frac{F^{kl, rs}}{F} - \frac{F^{kl} F^{rs}}{F^2} =  {\frac{1}{N}} \sum_{I \in \mathcal{I}_p}  \lambda_I^{-1} \frac{\partial^2 \lambda_I}{\partial a_{kl} \partial a_{rs}} - {\frac{1}{N}} \sum_{I \in \mathcal{I}_p} \lambda_I^{-2} \frac{\partial \lambda_I}{\partial a_{kl}} \frac{\partial \lambda_I}{\partial a_{rs}}. \]
Together with \eqref{eq4-23}, the formulas in \cite{Ball1984} and Codazzi equation we know that at $X_0$,
\begin{equation} \label{eq: concavity}
\begin{aligned}
- \sum_{klrsj} F^{kl, rs} h_{klj} h_{rsj}
= & - \frac{F}{N} \sum_{klrsj} \sum_{i} \frac{\partial^2 \kappa_i}{\partial a_{kl} \partial a_{rs}}  h_{klj} h_{rsj} \sum_{I \ni i} \lambda_I^{-1} \\
+ \frac{F}{N} \sum_{klrsj} \sum_{I \in \mathcal{I}_p} & \lambda_I^{-2} \frac{\partial \sum_{i \in I} \kappa_i}{\partial a_{kl}} \frac{\partial \sum_{m \in I} \kappa_m}{\partial a_{rs}}  h_{klj} h_{rsj} \\
= \sum_{ij} \sum_{l \neq i} \frac{f_i - f_l}{\kappa_l - \kappa_i}  h_{ijl}^2 & + \frac{F}{N} \sum_{j} \sum_{I \in \mathcal{I}_p} \lambda_I^{-2} \bigg( \sum_{i \in I} h_{iij} \bigg)^2 \\
\geq  2 \sum_{i \neq j} \frac{f_i - f_j}{\kappa_j - \kappa_i}  h_{jji}^2 + & \frac{F}{N} \sum_{i} \sum_{I \in \mathcal{I}_p} \lambda_I^{-2} \bigg( \sum_{j \in I} h_{jji} \bigg)^2.
\end{aligned}
\end{equation}

For each fixed $i$,  set
\[ s^{(i)} = s :=  \frac{\beta}{\nu^{n + 1}} \frac{u_i}{u} (\nu^{n+1} - \kappa_i) H,  \]
\[ t^{(i)}_j = t_j := h_{jji}, \qquad j = 1, \ldots, n, \]
and
\[ t^{(i)} = t := (t_1, \ldots, t_n)^T .  \]
We shall omit the superscript $(i)$ whenever there is no ambiguity.
Also let
\begin{equation} \label{eq: Qi}
\begin{aligned}
Q_i := & 2 \sum_{j \neq i} \frac{S_i - S_j}{\kappa_j - \kappa_i}  t_j^2 +  \sum_{I \in \mathcal{I}_p} \lambda_I^{-2} \bigg( \sum_{j \in I} t_j \bigg)^2 \\
= & 2 \sum_{j \neq i} \sum_{\substack{K \subset \{1, \ldots, n\} \setminus \{i,j\} \\ |K| = p - 1}}
\frac{1}{\lambda_{K \cup \{i\}} \lambda_{K \cup \{j\}}} t_j^2 +  \sum_{I \in \mathcal{I}_p} \lambda_I^{-2} \bigg( \sum_{j \in I} t_j \bigg)^2.
\end{aligned}
\end{equation}
Then \eqref{eq: concavity} becomes
\begin{equation} \label{eq: Concavity}
- \sum_{klrsj} F^{kl, rs} h_{klj} h_{rsj} \geq  \frac{F}{N} \sum_i Q_i.
\end{equation}
As a result, \eqref{eq4-8} reduces to
\begin{equation} \label{eq: lead-inequality}
\begin{aligned}
& (\beta - 1) \bigg( \sum S_i + \sum S_i \kappa_i^2 \bigg)  +  \frac{\sum_i Q_i}{H}
\\ &
+ \frac{2 \beta}{\nu^{n + 1}} \sum S_i \frac{u_i^2}{u^2} \big( \kappa_i - \nu^{n+1} \big) - \frac{\beta (\beta - 1)}{(\nu^{n + 1})^2} \sum S_i  \frac{u_i^2}{u^2} (\nu^{n + 1} - \kappa_i)^2 < 0.
\end{aligned}
\end{equation}

\vspace{2mm}

\subsection{Lagrange multiplier method}

For each fixed $i$, we find the minimum of $Q_i$ under the constraint $\sum_j t_j = s$ and $\sum_j S_j t_j = 0$.

\begin{prop}[Exact constrained minimization]
\label{prop:schur}
Fix $\kappa\in\mathcal{P}_p$ and $i \in \{1, \ldots, n\}$. For $I \in \mathcal{I}_p$, let $\mathbf{1}_I \in \mathbb{R}^n$ be its indicator vector. Let $\mathcal{B} \in \mathbb{R}^{N \times n}$ be the $p$-subset incidence matrix, whose row indexed by $I$ is $\mathbf{1}_I^T$, and set
\begin{gather*}
M_i = \mathcal{B}^T \mbox{diag}\Big( \lambda_I^{-2} \Big) \mathcal{B}
+ 2 \mbox{diag}(d_1, \ldots, d_n), \\
d_i = 0, \qquad
d_j = \sum_{\substack{K \subset \{1, \ldots, n\} \setminus \{i,j\} \\ |K| = p - 1}}
\frac{1}{\lambda_{K \cup \{i\}} \lambda_{K \cup \{j\}}}   \qquad  j \neq i.
\end{gather*}
Then $M_i \in \mbox{Sym}(n)$ is positive definite and $Q_i = t^T M_i t$. If the components of $\kappa$ are not all equal, put
\[
\mathsf U = \left(
              \begin{array}{ccc}
                1 & \cdots & 1 \\
                S_1 & \cdots & S_n \\
              \end{array}
            \right)^T, \quad
            c = \binom{1}{0}, \quad
\mathsf G_i = \mathsf U^T M_i^{-1} \mathsf U.
\]
The matrix $\mathsf G_i$ is positive definite, and
\begin{equation}
\mathcal{Q}_i := \min_{\substack{\sum_j t_j = 1, \\ \sum_j S_j t_j = 0}} Q_i
= c^T \mathsf G_i^{-1} c, \qquad
t_*(s) =  s M_i^{-1} \mathsf U \mathsf G_i^{-1} c.
\label{eq:schur}
\end{equation}
For every $t$ satisfying $\sum_j t_j = s$ and $\sum_j S_j t_j = 0$,
\begin{equation}
Q_i = \mathcal{Q}_i s^2 + \big( t - t_*(s) \big)^T M_i \big( t - t_*(s) \big).
\label{eq:schur-remainder}
\end{equation}
\end{prop}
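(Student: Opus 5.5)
The plan is to check first that $Q_i=t^TM_it$, then show $M_i$ is positive definite, and finally read off the constrained minimum from standard equality-constrained quadratic minimization.

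\textbf{The representation $Q_i=t^TM_it$.} Expand \eqref{eq: Qi}. The first sum is $2\sum_j d_jt_j^2$ with $d_i=0$; this uses the second expression in \eqref{eq: Qi}, which is equivalent to the first by \eqref{eq:a-difference}. The second sum is
\[
\sum_I\lambda_I^{-2}(\mathbf 1_I^Tt)^2=t^T\mathcal B^T\mathrm{diag}(\lambda_I^{-2})\mathcal Bt .
\]
Symmetry of $M_i$ is evident.

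\textbf{Positive definiteness of $M_i$.} Since $\kappa\in\mathcal P_p$, every $\lambda_I>0$, so each $d_j>0$ for $j\ne i$. It follows that $M_i\ge \mathcal B^T D\mathcal B\ge0$ with $D=\mathrm{diag}(\lambda_I^{-2})>0$. Suppose $t^TM_it=0$. Then $t_j=0$ for all $j\ne i$, and $\mathcal Bt=0$. Taking any $I\ni i$ (possible since $p\ge1$) gives $t_i=0$. Hence $M_i$ is positive definite, and in particular invertible.

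\textbf{The constrained minimum.} Suppose the components of $\kappa$ are not all equal. Then by \eqref{eq:a-difference} the $S_j$ are not all equal: if $\kappa_j\neq\kappa_k$, then $S_j-S_k=(\kappa_k-\kappa_j)\cdot(\text{positive})\ne0$. So the columns $\mathbf 1$ and $(S_j)$ of $\mathsf U$ are linearly independent, $\mathsf U$ has rank $2$, and $\mathsf G_i=\mathsf U^TM_i^{-1}\mathsf U$ is positive definite, because $M_i^{-1}>0$ and $\mathsf U$ is injective. The constraints read $\mathsf U^Tt=sc$. The candidate minimizer is $t_*=sM_i^{-1}\mathsf U\mathsf G_i^{-1}c$, which satisfies $\mathsf U^Tt_*=s\,\mathsf G_i\mathsf G_i^{-1}c=sc$.

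\textbf{The remainder identity \eqref{eq:schur-remainder}.} For any feasible $t$, write $t=t_*+v$ with $\mathsf U^Tv=0$. Then
\[
t^TM_it=t_*^TM_it_*+2v^TM_it_*+v^TM_iv .
\]
The cross term vanishes, since $M_it_*=s\,\mathsf U\mathsf G_i^{-1}c$ and $v^T\mathsf U=0$. Also
\[
t_*^TM_it_*=s^2c^T\mathsf G_i^{-1}\mathsf U^TM_i^{-1}\mathsf U\mathsf G_i^{-1}c=s^2c^T\mathsf G_i^{-1}c .
\]
This gives \eqref{eq:schur-remainder}. Setting $s=1$ and using $M_i>0$ gives the minimum formula in \eqref{eq:schur}, attained exactly at $t_*(1)$.

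No step is a serious obstacle. The only points needing care are the strict positivity of the $d_j$, the use of a set $I\ni i$ to kill $t_i$, and the rank-two property of $\mathsf U$, which is where the hypothesis that $\kappa$ has non-constant components enters.
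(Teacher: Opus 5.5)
Your proposal is correct. For the representation $Q_i=t^TM_it$ and for the constrained minimization it follows the paper's proof: rank two of $\mathsf U$ via \eqref{eq:a-difference}, feasibility $\mathsf U^Tt_*=sc$, and vanishing of the cross term because $M_it_*\in\operatorname{range}\mathsf U$.

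The one genuinely different step is the positive definiteness of $M_i$. You use strict positivity of $d_j$ for $j\neq i$ to force $t_j=0$ off $i$, and then a single row $\mathbf 1_I^T$ with $I\ni i$ to kill $t_i$. The paper instead ignores the diagonal part and shows that $\mathcal B$ alone has full column rank. For any two indices $a\neq b$, the rows indexed by $K\cup\{a\}$ and $K\cup\{b\}$, with $|K|=p-1$ and $K$ avoiding $a$ and $b$, give $t_a=t_b$; so $t$ is constant and hence zero.

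Your argument is shorter. The paper's gives the stronger fact that $\mathcal B^T\mathrm{diag}(\lambda_I^{-2})\mathcal B$ is already positive definite, independently of $i$ and of the $d_j$.

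Both arguments rely on $p\le n-1$. The paper says so explicitly, while in your version it is hidden in the claim $d_j>0$ and in the positive factor of $S_j-S_k$. The sum over $K\subset\{1,\ldots,n\}\setminus\{i,j\}$ with $|K|=p-1$ is empty when $p=n$, and then $M_i$ degenerates. You should state this, since it is exactly where the hypothesis enters your proof.
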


\begin{proof}
Applying Lagrange multiplier method, we obtain the constrained minimum and the quadratic remainder; compare \cite[Theorem~4.22]{Sui2026}.

If $\mathcal{B} t = 0$, comparison of two $p$-sets differing only by the replacement of $i$ with $j$ gives $t_i = t_j$. Such pairs of sets exist because $p \leq n-1$. Hence all components of $t$ are equal. Since their sum over any $p$-set is zero, every component vanishes. Thus $\mathcal{B}$ has full column rank and $M_i > 0$.

By \eqref{eq:a-difference}, all $S_i$ coincide if and only if all $\kappa_i$ coincide. Consequently the columns of $\mathsf U$ are independent when $\kappa\notin\mathbb{R} \mathbf{1}$, and $\mathsf G_i > 0$. The displayed minimizer satisfies $\mathsf U^T t_*(s) = s c$ and $M_i t_*(s) \in \mbox{range} \mathsf U$. Therefore the cross term vanishes when $t - t_*(s)$ lies in $\ker \mathsf U^T$, proving \eqref{eq:schur-remainder} and \eqref{eq:schur}.
\end{proof}

When $\kappa \in \mathbb{R} \mathbf{1}$, $\mathsf G_i^{-1}$ and $\mathcal{Q}_i$ are undefined. The coercivity estimates below are stated directly for $Q_i$ on the constraint space and remain valid at such vectors.

Proposition \ref{prop:schur} proves that the minimum is attained, a fact used to establish strictness in Proposition \ref{prop:universal}. Sections~\ref{sec:universal} and~\ref{sec:directional} prove the quantitative bounds needed for uniform global curvature estimate directly, without evaluating the inverse matrices in \eqref{eq:schur}.

\vspace{4mm}

\section{A sharp universal lower bound}
\label{sec:universal}

\begin{lemma}[An estimate for $\kappa_i\leq 0$]
\label{lem:negative}
Let $\kappa\in\mathcal{P}_p$ and $t \in \mathbb{R}^n$. If $\kappa_i \leq 0$ and $\sum_j S_j t_j = 0$, then
\begin{equation}
Q_i \geq \frac{2 S_i}{H - q \kappa_i} \bigg( \sum_j t_j \bigg)^2
\geq \frac{2 S_i}{p H} \bigg( \sum_j t_j \bigg)^2.
\label{eq:negative-bound}
\end{equation}
\end{lemma}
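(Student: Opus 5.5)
The plan is to discard the nonnegative $\lambda_I^{-2}$-part of $Q_i$ and control $\big(\sum_j t_j\big)^2$ by the diagonal part alone, using the linear constraint to eliminate $t_i$ and then Cauchy--Schwarz. The coefficient $d_j$ of Proposition~\ref{prop:schur} is a sum of positive terms, so $d_j>0$ for $j\neq i$. By \eqref{eq:a-difference},
\[
S_i-S_j=d_j(\kappa_j-\kappa_i),\qquad j\neq i,
\]
which holds even when $\kappa_j=\kappa_i$. Hence $Q_i\ge 2\sum_{j\ne i}d_j t_j^2$.

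First I eliminate $t_i$. The constraint $\sum_j S_jt_j=0$ gives $S_it_i=-\sum_{j\ne i}S_jt_j$, so
\[
\sum_j t_j=\frac1{S_i}\sum_{j\ne i}(S_i-S_j)t_j=\frac1{S_i}\sum_{j\ne i}d_j(\kappa_j-\kappa_i)t_j .
\]
Cauchy--Schwarz with weights $d_j$ then gives
\[
S_i^2\Big(\sum_j t_j\Big)^2\le \Big(\sum_{j\ne i}d_j(\kappa_j-\kappa_i)^2\Big)\Big(\sum_{j\ne i}d_jt_j^2\Big).
\]

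Next I compute the first factor using $d_j(\kappa_j-\kappa_i)^2=(S_i-S_j)(\kappa_j-\kappa_i)$ (the $j=i$ term vanishes) and the identities \eqref{eq:weight-sums}:
\[
D:=\sum_{j}(S_i-S_j)(\kappa_j-\kappa_i)=S_i(H-n\kappa_i)-N+pS\kappa_i .
\]
I then claim $D\le S_i(H-q\kappa_i)$. This is equivalent to $p\kappa_i(S-S_i)\le N$, which holds because $\kappa_i\le0$ and $S-S_i>0$ make the left side nonpositive. Also $H-q\kappa_i\ge H>0$.

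If $D=0$, the Cauchy--Schwarz inequality forces $\sum_j t_j=0$, and there is nothing to prove. Otherwise,
\[
Q_i\ge 2\sum_{j\ne i}d_jt_j^2\ge \frac{2S_i^2}{D}\Big(\sum_j t_j\Big)^2\ge \frac{2S_i}{H-q\kappa_i}\Big(\sum_j t_j\Big)^2 .
\]

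Finally, the second inequality in \eqref{eq:negative-bound} is $H-q\kappa_i\le pH$, which is the lower cone bound $\kappa_i>-\frac{p-1}{q}H$ from \eqref{eq:cone-bounds}.

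The only delicate step is the evaluation and sign of $D$. It relies on the exact identities $\sum_j S_j=pS$ and $\sum_j S_j\kappa_j=N$; the hypothesis $\kappa_i\le0$ is used precisely there, to discard the term $p\kappa_i(S-S_i)$.
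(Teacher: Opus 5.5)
Your proposal is correct and follows the paper's proof essentially step for step. Both arguments use the same chain: the constraint identity $S_i\sum_j t_j=\sum_{j\ne i}d_j(\kappa_j-\kappa_i)t_j$, then weighted Cauchy--Schwarz with weights $d_j$, then the evaluation $D=S_i(H-n\kappa_i)-N+pS\kappa_i\le S_i(H-q\kappa_i)$ using $\kappa_i\le 0$, and finally the cone bound $\kappa_i>-\frac{p-1}{q}H$. Your separate case $D=0$ is harmless but vacuous: it would force all $\kappa_j=\kappa_i\le 0$ and hence $H\le 0$, which is impossible on $\mathcal{P}_p$.
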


\begin{proof}
For any $\kappa \in \mathcal{P}_p$ and $\sum S_j t_j = 0$, \eqref{eq:a-difference} gives
\begin{equation}
\begin{aligned}
S_i \sum_j t_j  = & S_i \sum_j t_j - \sum_j S_j t_j
= \sum_{j \neq i} (S_i - S_j) t_j \\
= & \sum_{j \neq i} \sum_{\substack{K \subset \{1, \ldots, n\} \setminus \{i,j\} \\ |K| = p - 1}}
\frac{1}{\lambda_{K \cup \{i\}} \lambda_{K \cup \{j\}}}  (\kappa_j - \kappa_i) t_j.
\label{eq:constraint-identity}
\end{aligned}
\end{equation}
 Using \eqref{eq:weight-sums} and $S = S_i + \sum_{J \not\ni i} \lambda_J^{-1}$, we obtain
\begin{equation}
\begin{aligned}
& \sum_{j \neq i} \sum_{\substack{K \subset \{1, \ldots, n\} \setminus \{i,j\} \\ |K| = p - 1}}
\frac{1}{\lambda_{K \cup \{i\}} \lambda_{K \cup \{j\}}}  (\kappa_j- \kappa_i)^2 \\
= & \sum_j (S_i - S_j)(\kappa_j - \kappa_i) \\
= & S_i (H - n \kappa_i) - N + p \kappa_i S \\
= & S_i (H - q \kappa_i) - N + p \kappa_i \sum_{J \not\ni i} \lambda_J^{-1}
\leq S_i (H - q \kappa_i).
\label{eq:Si}
\end{aligned}
\end{equation}
Consequently
\[ \begin{aligned}
& S_i^2 \bigg( \sum_j t_j \bigg)^2 = \Bigg( \sum_{j \neq i} \sum_{\substack{K \subset \{1, \ldots, n\} \setminus \{i,j\} \\ |K| = p - 1}}
\frac{1}{\lambda_{K \cup \{i\}} \lambda_{K \cup \{j\}}}  (\kappa_j - \kappa_i) t_j \Bigg)^2 \\
\leq & \sum_{j \neq i} \sum_{\substack{K \subset \{1, \ldots, n\} \setminus \{i,j\} \\ |K| = p - 1}}
\frac{1}{\lambda_{K \cup \{i\}} \lambda_{K \cup \{j\}}}  (\kappa_j - \kappa_i)^2  \sum_{j \neq i} \sum_{\substack{K \subset \{1, \ldots, n\} \setminus \{i,j\} \\ |K| = p - 1}}
\frac{1}{\lambda_{K \cup \{i\}} \lambda_{K \cup \{j\}}}  t_j^2 \\
\leq & \frac{1}{2} S_i ( H - q \kappa_i) Q_i.
\end{aligned} \]
This proves the first inequality in \eqref{eq:negative-bound}. The second follows from $H - q \kappa_i \leq p H$, which is a consequence of \eqref{eq:cone-bounds}.
\end{proof}

\begin{lemma}[An unconstrained estimate for $\kappa_i \geq 0$]
\label{lem:positive-universal}
Let $\kappa \in \mathcal{P}_p$. If $\kappa_i \geq 0$, then for every $t \in \mathbb{R}^n$,
\begin{equation}
Q_i \geq \frac{2 S_i}{p H} \bigg( \sum_j t_j \bigg)^2.
\label{eq:positive-universal}
\end{equation}
\end{lemma}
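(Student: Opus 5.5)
The plan is a single Cauchy--Schwarz argument. Since no constraint is available, I will write the linear functional $T:=\sum_j t_j$ as a combination of the quantities that $Q_i$ controls, namely the $p$-sums $y_I:=\sum_{j\in I}t_j$ (weight $\lambda_I^{-2}$) and the individual $t_j$, $j\neq i$ (weight $2d_j$, with $d_j$ as in Proposition~\ref{prop:schur}). If
\[
T=\sum_{I}a_I y_I+\sum_{j\ne i}b_j t_j ,
\]
then Cauchy--Schwarz gives
\[
T^2\le\Big(\sum_I a_I^2\lambda_I^2+\sum_{j\neq i}\frac{b_j^2}{2d_j}\Big)Q_i .
\]
It therefore suffices to find coefficients with $\sum_I a_I^2\lambda_I^2+\sum_{j\ne i}b_j^2/(2d_j)\le pH/(2S_i)$.

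For the first attempt I will average the identities $T=y_I+\sum_{j\notin I}t_j$ over $I\ni i$ with weights $w_I=\lambda_I^{-1}/S_i$. This gives $a_I=w_I$ and
\[
b_j=S_i^{-1}\sum_{K}\lambda_{K\cup\{i\}}^{-1},
\]
where $K$ runs over the $(p-1)$-subsets avoiding $i,j$. A second Cauchy--Schwarz inside $b_j$ against $d_j=\sum_K(\lambda_{K\cup\{i\}}\lambda_{K\cup\{j\}})^{-1}$ yields
\[
\frac{b_j^2}{d_j}\le S_i^{-2}\sum_K\frac{\lambda_{K\cup\{j\}}}{\lambda_{K\cup\{i\}}} .
\]
Summing over $j$ and regrouping by $I=K\cup\{i\}$ requires computing $\sum_{j\notin I}\lambda_{I\setminus\{i\}\cup\{j\}}$. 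This sum equals $q(\lambda_I-\kappa_i)+(H-\lambda_I)$, and the hypothesis $\kappa_i\ge0$ is used exactly here to drop $-q\kappa_i$. The resulting bound is
\[
S_i^{-2}\big[(q-1)\binom{n-1}{p-1}+HS_i\big].
\]

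The main obstacle is the final comparison. The crude weights above leave a deficit: the bound closes only if $H S_i\ge\frac{q+1}{p-1}\binom{n-1}{p-1}$, whereas one only knows $HS_i\ge\binom{n-1}{p-1}$ from Cauchy--Schwarz and \eqref{eq:cone-bounds}. I therefore expect to interpolate. I will use a two-parameter family that also averages the identities $T=t_i+y_J+\sum_{j\notin J\cup\{i\}}t_j$ over $J\not\ni i$, so that $t_i$ is paid for by sets not containing $i$. I will also take the weights proportional to $\lambda_I^{-2}$ rather than $\lambda_I^{-1}$, since those are the optimal Cauchy--Schwarz weights. Then I will optimize the mixing parameter, using $\sum_i S_i\kappa_i=N$ and $\sum_I\lambda_I=\binom{n-1}{p-1}H$ from Lemma~\ref{lem:elementary} to reduce everything to $HS_i$.

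Sharpness of the constant $2/p$ suggests checking the extremal configuration, $\kappa$ concentrated in one direction with $\kappa_i=0$, to calibrate the parameter before doing the general estimate. If the two-family averaging still fails, the fallback is to bound $\mathcal Q_i$ from Proposition~\ref{prop:schur} with only the first constraint retained, i.e. to compute $(\mathbf 1^TM_i^{-1}\mathbf 1)^{-1}$. I would do this by comparing $M_i$ from below with a rank-structured matrix built from $\mathcal B^T\mathrm{diag}(\lambda_I^{-2})\mathcal B$ whose inverse is explicit.
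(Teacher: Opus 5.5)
Your first attempt is essentially the paper's own argument, and it does close; the deficit you found comes from an unnecessary weakening. The paper applies Cauchy--Schwarz to $T=y_I+\sum_{j\notin I}t_j$ separately for each $I\ni i$, with weights $\lambda_I^{-2}$ and $2/(\lambda_I\lambda_{(I\setminus\{i\})\cup\{j\}})$. It then discards the sets $J\not\ni i$ and applies Cauchy--Schwarz again over $I\ni i$. This yields exactly the cost your weights $w_I=\lambda_I^{-1}/S_i$ give before you drop anything:
\[
T^2\le\frac{1}{2S_i^2}\Big[(q+1)\binom{n-1}{p-1}+S_i\,(H-q\kappa_i)\Big]Q_i .
\]
The gap is the step where you use $\kappa_i\ge0$ to discard $-q\kappa_i$, and then compare only with the crude bound $HS_i\ge\binom{n-1}{p-1}$. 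That combination cannot work. Take $\kappa=(1,\varepsilon,\ldots,\varepsilon)$ and $i=1$; then $HS_1\to\binom{n-1}{p-1}$, and your sufficient condition $HS_i\ge\frac{q+1}{p-1}\binom{n-1}{p-1}$ becomes $p\ge q+2$ in the limit. This is false, for instance, when $(n,p)=(3,2)$. At that configuration the discarded term $-q\kappa_1\approx -q$ is precisely what is needed.

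The missing idea is to keep $-q\kappa_i$ and bound $1/S_i$ by the harmonic--arithmetic mean inequality over the $\binom{n-1}{p-1}$ sets $I\ni i$. Their $p$-sums average to $\kappa_i+\frac{p-1}{n-1}(H-\kappa_i)$, so
\[
H-q\kappa_i+(q+1)\frac{\binom{n-1}{p-1}}{S_i}\le H+\kappa_i+\frac{(q+1)(p-1)}{n-1}(H-\kappa_i)\le pH-(p-2)\kappa_i\le pH .
\]
The middle step uses $q+1\le n-1$ and $H>\kappa_i$. Only the last step uses $\kappa_i\ge0$ together with $p\ge2$. The $(q+1)\kappa_i$ produced by the mean inequality cancels against $-q\kappa_i$, which is why that term cannot be thrown away early. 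As written, your proposal does not prove the lemma. The two-family interpolation, the $\lambda_I^{-2}$ weights and the fallback via $(\mathbf{1}^TM_i^{-1}\mathbf{1})^{-1}$ are only sketched, and none of them is needed once this one term is retained.
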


\begin{proof}
For $I \ni i$ and $j \notin I$, write
\[
I_j = (I\setminus\{i\})\cup\{j\}, \qquad
d_I = \lambda_I + \frac{1}{2} \sum_{j \notin I} \lambda_{I_j} > 0.
\]
Since $\sum_j t_j = \sum_{j \in I} t_j + \sum_{j \notin I} t_j$, Cauchy--Schwarz gives
\begin{equation}
\frac{\big( \sum_{j \in I} t_j \big)^2}{\lambda_I^2}
+ 2 \sum_{j \notin I} \frac{t_j^2}{\lambda_I \lambda_{I_j}}
\geq \frac{\big( \sum_j t_j \big)^2}{\lambda_I d_I}.
\label{eq:positive-local-cs}
\end{equation}
Also,
\begin{equation}
\sum_{I \ni i} \sum_{j \notin I}
\frac{t_j^2}{\lambda_I \lambda_{I_j}}
= \sum_{j \neq i} \sum_{\substack{K \subset \{1, \ldots, n\} \setminus \{i,j\} \\ |K| = p - 1}}
\frac{1}{\lambda_{K \cup \{i\}} \lambda_{K \cup \{j\}}} t_j^2.
\label{eq:positive-counting}
\end{equation}
Thus
\[ \begin{aligned}
Q_i \geq & Q_i - \sum_{J \not\ni i} \frac{\big( \sum_{j \in J} t_j \big)^2}{\lambda_J^2} \\
= &  2 \sum_{j \neq i} \sum_{\substack{K \subset \{1, \ldots, n\} \setminus \{i,j\} \\ |K| = p - 1}}
\frac{1}{\lambda_{K \cup \{i\}} \lambda_{K \cup \{j\}}} t_j^2 + \sum_{I \ni i} \lambda_I^{-2} \bigg( \sum_{j \in I} t_j \bigg)^2 \\
= &  2 \sum_{I \ni i} \sum_{j \notin I}
\frac{t_j^2}{\lambda_I \lambda_{I_j}} + \sum_{I \ni i} \lambda_I^{-2} \bigg( \sum_{j \in I} t_j \bigg)^2 \\
\geq &  \sum_{I \ni i} \frac{\big( \sum_j t_j \big)^2}{\lambda_I d_I}.
\end{aligned} \]

Applying Cauchy--Schwarz once more, we obtain
\begin{equation}
\begin{aligned}
Q_i \geq
\frac{S_i^2 \big( \sum_j t_j \big)^2}{\sum_{I \ni i} d_I / \lambda_I}
= \frac{2 S_i \big( \sum_j t_j \big)^2}{H - q \kappa_i + (q+1) \binom{n-1}{p-1}/S_i}.
\label{eq:positive-denominator}
\end{aligned}
\end{equation}
The equality follows from the identity
\[ \begin{aligned}
\sum_{I \ni i} \frac{d_I}{\lambda_I} = & \sum_{I \ni i} \bigg( 1 +  \frac{\sum_{j \notin I} \lambda_{I_j}}{2 \lambda_I} \bigg) =  \sum_{I \ni i} \bigg( 1 +  \frac{\sum_{j \notin I} (\lambda_I - \kappa_i + \kappa_j)}{2 \lambda_I} \bigg) \\
= & \sum_{I \ni i} \bigg( 1 +  \frac{q}{2} - q \frac{\kappa_i}{2 \lambda_I} + \frac{H - \lambda_I}{2 \lambda_I} \bigg) = \frac{1}{2} (q + 1) \binom{n-1}{p-1} + \frac{S_i}{2} (H - q \kappa_i).
\end{aligned} \]
The denominator in \eqref{eq:positive-denominator} is positive because all $d_I$ are positive.

The harmonic--arithmetic mean inequality gives
\[
\frac{\binom{n-1}{p-1}}{S_i} \leq \frac{1}{\binom{n-1}{p-1}} \sum_{I \ni i} \lambda_I
= \kappa_i + \frac{p-1}{n-1}(H - \kappa_i).
\]
Since $H - \kappa_i > 0$, $q + 1 \leq n - 1$, and $p \geq 2$,
\begin{equation*}
\begin{aligned}
H - q \kappa_i + (q + 1) \frac{ \binom{n-1}{p-1} }{ S_i }
\leq & H + \kappa_i + \frac{(q+1)(p-1)}{n-1} ( H - \kappa_i ) \\
\leq & p H - (p - 2) \kappa_i \leq p H.
\end{aligned}
\end{equation*}
Substitution in \eqref{eq:positive-denominator} proves the lemma.
\end{proof}

\begin{prop}[Sharp universal constant]
\label{prop:universal}
For every $\kappa\in\mathcal{P}_p$, every $i$, and every $t \in \mathbb{R}^n$ with $\sum_j S_j t_j = 0$,
\begin{equation}
Q_i \geq \frac{2 S_i}{p H} \bigg( \sum_j t_j \bigg)^2.
\label{eq:universal}
\end{equation}
Equality in \eqref{eq:universal} holds only for $t = 0$. In particular,
\[ \frac{H \mathcal{Q}_i}{S_i} > \frac{2}{p} \]
for each fixed $\kappa\notin\mathbb{R} \mathbf{1}$, while its infimum over these curvature vectors is $2/p$. The constant $2/p$ is optimal over $\mathcal{P}_p$ for every $n, p$ under consideration.
\end{prop}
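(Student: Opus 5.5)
The inequality \eqref{eq:universal} itself comes from combining the two preceding lemmas. If $\kappa_i\le 0$, Lemma~\ref{lem:negative} applies, since the constraint $\sum_j S_jt_j=0$ is assumed. If $\kappa_i\ge 0$, Lemma~\ref{lem:positive-universal} gives the bound even without the constraint. So \eqref{eq:universal} holds in all cases, and the work is in the strictness and sharpness claims.

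\emph{Strictness.} I would track when every inequality in the chain can be an equality.

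In the case $\kappa_i<0$, the step $H-q\kappa_i\le pH$ is strict. Indeed, $pH-(H-q\kappa_i)=q(H+\kappa_i)$, and by \eqref{eq:cone-bounds} $H+\kappa_i>H-\kappa_i\cdot 0>0$ when $\kappa_i<0$; more directly, $H+\kappa_i>0$ holds whenever $\kappa_i>-H$, which follows from $\kappa_i>-\frac{p-1}{q}H\ge -H$ or a direct check. Hence equality forces $\sum_j t_j=0$. Then $Q_i=0$, and positive definiteness of $M_i$ (Proposition~\ref{prop:schur}) gives $t=0$.

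In the case $\kappa_i\ge 0$, the final estimate passes through $pH-(p-2)\kappa_i\le pH$ and through dropping $\sum_{J\not\ni i}\lambda_J^{-2}(\sum_{j\in J}t_j)^2\ge0$. I would instead use the slack in $(q+1)\le n-1$ when $p\ge2$. More robustly, I would use the term $\frac{(q+1)(p-1)}{n-1}(H-\kappa_i)$ versus $(p-1)(H-\kappa_i)$, which is strict because $H-\kappa_i>0$ and $q+1<n-1$ or $p-1<\ldots$; this needs checking. If no single step is strict for all $p$, I would argue that equality in both Cauchy--Schwarz steps and in the dropped term simultaneously forces $\sum_{j\in J}t_j=0$ for all $J\not\ni i$ and $t_j\propto$ explicit ratios. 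Together with $\sum_j S_jt_j=0$ this should force $t=0$.

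Given strictness, $\mathcal Q_i>\frac{2S_i}{pH}$ at each fixed $\kappa\notin\mathbb R\mathbf 1$, because by Proposition~\ref{prop:schur} the minimum is attained at $t_*(1)\neq0$.

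\emph{Optimality.} I would exhibit a family $\kappa^{(m)}\in\mathcal P_p$ with test vectors $t^{(m)}$ on the constraint space such that $HQ_i/(S_i(\sum t_j)^2)\to 2/p$. Guided by the equality analysis of Lemma~\ref{lem:negative}, the natural choice is $\kappa_i$ near the lower cone bound $-\frac{p-1}{q}H$, where $H-q\kappa_i\to pH$. For instance, take $\kappa=(-(p-1)+\epsilon,1,\dots,1)$ with index $i=1$, which is the boundary of $\mathcal P_p$ in the direction making $p$-sums containing $i$ tend to $0$. Then take $t$ as in the Cauchy--Schwarz equality case, $t_j$ proportional to $(\kappa_j-\kappa_i)$ for $j\ne i$, with $t_i$ chosen to satisfy $\sum_j S_jt_j=0$. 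In the limit I would verify three things: the slack term $N-p\kappa_i\sum_{J\not\ni i}\lambda_J^{-1}$ in \eqref{eq:Si} becomes negligible relative to $S_i(H-q\kappa_i)$, which blows up since $S_i\to\infty$; the dropped terms $\sum_I\lambda_I^{-2}(\sum_{j\in I}t_j)^2$ become negligible; and the ratio tends to $2/p$.

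The main obstacle is this asymptotic computation. In particular, the contribution of $\lambda_I^{-2}$ for $I\ni i$ is large, so $t_i$ must be tuned to make $\sum_{j\in I}t_j$ small for those $I$. Checking that this tuning is compatible with the constraint is the delicate step.
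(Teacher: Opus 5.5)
Your plan follows the paper's proof. The bound comes from Lemmas~\ref{lem:negative} and~\ref{lem:positive-universal}, strictness comes from tracking equality cases, and sharpness comes from $\kappa=(-(p-1)+\varepsilon,1,\ldots,1)$ with $i=1$. Your test vector ($t_j$ constant for $j\neq1$, both constraints imposed) is exactly the paper's.

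Your strictness argument for $\kappa_i<0$ is wrong as written, though easily repaired. In fact $pH-(H-q\kappa_i)=(p-1)H+q\kappa_i$, not $q(H+\kappa_i)$. Also, $-\frac{p-1}{q}H\ge-H$ fails whenever $2p>n+1$: for $n=4$, $p=3$, the vector $\kappa=(-1.9,1,1,1)\in\mathcal P_3$ has $H+\kappa_1<0$. The fix is that $(p-1)H+q\kappa_i>0$ is exactly the strict lower bound in \eqref{eq:cone-bounds}. Hence $H-q\kappa_i<pH$ on all of $\mathcal P_p$, and the second inequality in \eqref{eq:negative-bound} is strict whenever $\sum_jt_j\neq0$. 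The paper instead uses the strict inequality in \eqref{eq:Si}, which comes from $N>0$ and $p\kappa_i\sum_{J\not\ni i}\lambda_J^{-1}\le0$; either route works.

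For $\kappa_i\ge0$ there is a real gap, which you partly anticipated. No scalar step has slack when $p=2$: then $q+1=n-1$ and $pH-(p-2)\kappa_i=pH$. Moreover, if all $\kappa_j$ with $j\ne i$ coincide, the harmonic--arithmetic step and the second Cauchy--Schwarz step are also equalities. So everything rests on the equality analysis you leave at ``should force $t=0$''.

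The missing step is a sign argument, and the constraint is not needed for it. Suppose $\sum_lt_l\neq0$. Equality in \eqref{eq:positive-local-cs} forces $t_j=\lambda_{I_j}\sum_lt_l/(2d_I)$ for $j\notin I$. Every $j\neq i$ is excluded by some $I\ni i$ because $p\le n-1$, and $\lambda_{I_j},d_I>0$. So all $t_j$ with $j\neq i$ are nonzero with the sign of $\sum_lt_l$. Hence $\sum_{j\in J}t_j\neq0$ for every $J\not\ni i$, contradicting the vanishing of the dropped term $\sum_{J\not\ni i}\lambda_J^{-2}\bigl(\sum_{j\in J}t_j\bigr)^2$.

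Finally, the sharpness step you call delicate is a direct substitution. With $\sum_jt_j=1$, the constraint forces $\sum_{j\in I}t_j=-\varepsilon/(p-\varepsilon)$ for every $I\ni1$, so those terms stay $O(1)$. One then gets $HQ_1/S_1=(q+\varepsilon)(n\varepsilon+2p)/\bigl(q(p-\varepsilon)^2\bigr)\to2/p$.
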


\begin{proof}
The bound follows from Lemma \ref{lem:negative} and Lemma \ref{lem:positive-universal}.

To prove the sharpness, take
\[
\kappa = \Big( -(p-1)+\varepsilon, 1, \ldots, 1 \Big),\qquad 0<\varepsilon<p,
\]
and $i=1$. Then $H = q + \varepsilon$, the sums indexed by sets containing $1$ all equal $\varepsilon$, and those indexed by sets not containing $1$ all equal $p$. Let
\[
t_j = \frac{p}{q(p-\varepsilon)}, \quad j\neq 1,\qquad
t_1 = 1 - (n-1) \frac{p}{q(p-\varepsilon)}.
\]
Direct substitution gives
\[ \sum_j t_j = 1, \qquad S_1 = \frac{\binom{n-1}{p-1}}{\varepsilon}, \qquad \frac{\binom{n-1}{p}}{\binom{n-1}{p-1}}=\frac{q}{p} \]
and
\[ \begin{aligned}
\sum_j S_j t_j = & \sum_I \frac{\sum_{j \in I} t_j}{\lambda_I} \\
= & \sum_{I \ni 1} \frac{\sum_{j \in I} t_j}{\lambda_I} + \sum_{I \not\ni 1} \frac{\sum_{j \in I} t_j}{\lambda_I}  \\
= & - \frac{\binom{n-1}{p-1}}{p - \varepsilon} + \binom{n-1}{p}  \frac{p}{q(p-\varepsilon)} = 0.
\end{aligned} \]
Substitution into the two sums in \eqref{eq: Qi} gives
\[
Q_1 = \frac{\binom{n-1}{p-1}}{(p-\varepsilon)^2}
+ \binom{n-1}{p} \bigg( \frac{p}{q(p-\varepsilon)} \bigg)^2+\frac{2(n-1)\binom{n-2}{p-1}}{p\varepsilon} \bigg( \frac{p}{q(p-\varepsilon)} \bigg)^2.
\]
Using $(n-1)\binom{n-2}{p-1} = q \binom{n-1}{p-1}$ gives
\begin{equation}
\frac{H Q_1}{S_1}
= \frac{(q + \varepsilon)(n\varepsilon+2p)}{q(p-\varepsilon)^2}
\longrightarrow \frac{2}{p}.
\label{eq:sharpness}
\end{equation}
Since
\[ \frac{H \mathcal{Q}_1}{S_1} \leq \frac{H Q_1}{S_1}, \]
equation \eqref{eq:sharpness}, together with \eqref{eq:universal}, proves that $2/p$ is optimal. The ratio is invariant under positive rescaling of $\kappa$. Thus $2/p$ remains sharp after normalization to any fixed positive level of $f$.

To see the strictness,
fix $\kappa\in\mathcal{P}_p$ and suppose $\sum_j S_j t_j = 0$. Since $Q_i$ is positive definite, equality in \eqref{eq:universal} with $\sum_j t_j = 0$ implies $t = 0$. Suppose $\sum_j t_j \neq 0$.  If $\kappa_i \leq 0$, \eqref{eq:Si} gives the strict inequality
\[ S_i (H - q \kappa_i) - N + p \kappa_i \sum_{J\not\ni i} \lambda_J^{-1}
< S_i (H - q \kappa_i). \]
Combining this strict bound with \eqref{eq:constraint-identity} and Cauchy--Schwarz gives strict inequality in \eqref{eq:universal}.

If $\kappa_i \geq 0$, equality in \eqref{eq:universal} would require equality at every step in the proof of Lemma \ref{lem:positive-universal}. In particular, $\sum_{j \in J} t_j = 0$ for every $p$-set $J$ not containing $i$, and equality would hold in Cauchy--Schwarz inequality \eqref{eq:positive-local-cs}, which requires
\[
\sum_{j \in I} t_j = \frac{\lambda_I \sum_l t_l}{d_I}, \qquad
t_j = \frac{\lambda_{I_j} \sum_l t_l}{2 d_I} \quad j \notin I.
\]
For each $j\neq i$, there is a $p$-set $I$ containing $i$ and excluding $j$. Consequently all $t_j$, $j\neq i$, are nonzero and have the sign of $\sum_l t_l$. Every $\sum_{j \in J} t_j$ with $i \notin J$ is then nonzero, a contradiction. The minimum defining $\mathcal{Q}_i$ is attained by Proposition \ref{prop:schur}. Applying the strict inequality to its minimizer gives
\[ \frac{H \mathcal{Q}_i}{S_i} > \frac{2}{p} \]
at each fixed nonumbilic curvature vector, while \eqref{eq:sharpness} shows that the infimum over the cone is $2/p$.
\end{proof}

\vspace{4mm}

\section{A direction-dependent lower bound}
\label{sec:directional}

\begin{prop}[Direction-dependent coercivity]
\label{prop:directional}
For every $\kappa\in\mathcal{P}_p$ and every $t \in \mathbb{R}^n$ with $\sum_j S_j t_j = 0$,
\begin{equation}
Q_i \geq \frac{2 S_i}{H + q |\kappa_i|} \bigg( \sum_j t_j \bigg)^2.
\label{eq:directional}
\end{equation}
Consequently, for every $\theta > 0$,
\begin{equation}
|\kappa_i| \leq \frac{\theta H}{q}
\quad\Longrightarrow\quad
Q_i \geq \frac{S_i}{H} \frac{2}{1 + \theta} \bigg( \sum_j t_j \bigg)^2.
\label{eq:central-general}
\end{equation}
Taking $\theta = 1/3$ in \eqref{eq:central-general} gives the coefficient $3/2$ whenever $|\kappa_i| \leq H/(3q)$.
\end{prop}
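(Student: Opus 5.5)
The plan is to split according to the sign of $\kappa_i$, reuse the two lemmas of Section~\ref{sec:universal}, and then derive \eqref{eq:central-general} by elementary algebra.

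\emph{Case $\kappa_i\le0$.} Here $|\kappa_i|=-\kappa_i$, so $H-q\kappa_i=H+q|\kappa_i|$. The first inequality in \eqref{eq:negative-bound} of Lemma~\ref{lem:negative} is then exactly \eqref{eq:directional}, and nothing more is needed.

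\emph{Case $\kappa_i\ge0$.} This is the real content. Lemma~\ref{lem:positive-universal} alone does not suffice: its denominator $H-q\kappa_i+(q+1)\binom{n-1}{p-1}/S_i$ can exceed $H+q\kappa_i$, for example near $\kappa_i=0$. So the constraint $\sum_j S_jt_j=0$ must be used. Write $X=\sum_j t_j$, $T_J=\sum_{j\in J}t_j$ and $D_j=d_j$ as in Proposition~\ref{prop:schur}, so that
\[
Q_i=2\sum_{j\ne i}D_jt_j^2+\sum_J\lambda_J^{-2}T_J^2 .
\]
The idea is to apply Cauchy--Schwarz to a single linear representation of $S_iX$ in the variables $(t_j)_{j\ne i}$ and $(T_J)_J$, chosen to use both the constraint and the $J\not\ni i$ terms of $Q_i$ (these are discarded in both earlier lemmas).

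I would build a one-parameter family of representations by interpolation:
\begin{itemize}
\item[(a)] the constraint identity \eqref{eq:constraint-identity}, which gives $S_iX=\sum_{j\ne i}D_j(\kappa_j-\kappa_i)t_j$;
\item[(b)] the decomposition $X=T_I+\sum_{j\notin I}t_j$ for $I\ni i$, summed with weights $\lambda_I^{-1}$, which gives $S_iX=\sum_{I\ni i}\lambda_I^{-1}T_I+\sum_{j\ne i}D_jt_j$ via the counting identity \eqref{eq:positive-counting};
\item[(c)] the constraint rewritten as $\sum_J\lambda_J^{-1}T_J=0$, added with a free multiplier $\mu$, so that the $J\not\ni i$ terms also enter.
\end{itemize}
Taking $S_iX=\theta\cdot$(a)$+(1-\theta)\cdot$(b)$+\mu\cdot$(c), Cauchy--Schwarz gives
\[
S_i^2X^2\le Q_i\,\Phi(\theta,\mu),
\]
where $\Phi$ is an explicit quadratic expression. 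It involves the weighted sums $\sum_{j\neq i} D_j(\kappa_j-\kappa_i)^2$ (computed in \eqref{eq:Si}), $\sum_{j\ne i} D_j(\kappa_j-\kappa_i)$, $\sum_{j\ne i} D_j$, $S_i$, and $\sum_{J\not\ni i}\lambda_J^{-1}$. All of these reduce, via \eqref{eq:a-difference}, \eqref{eq:weight-sums} and $\sum_{I\ni i}\lambda_{I_j}/\lambda_I$-type identities as in the proof of Lemma~\ref{lem:positive-universal}, to polynomials in $H$, $\kappa_i$, $S_i$, $\sum_{J\not\ni i}\lambda_J^{-1}$ and $N$.

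The target is to choose $\theta,\mu$, first trying $\theta$ near $1$ and $\mu$ chosen to cancel the term $p\kappa_i\sum_{J\not\ni i}\lambda_J^{-1}-N$ appearing in \eqref{eq:Si}, so that
\[
\Phi\le\tfrac12 S_i(H+q\kappa_i).
\]
This yields \eqref{eq:directional}. The main obstacle is exactly this optimization. In (a) the positive contribution $p\kappa_i\sum_{J\not\ni i}\lambda_J^{-1}$ is not controlled by $S_i$ alone when some $\lambda_J$ with $J\not\ni i$ is small. The $J\not\ni i$ part of $Q_i$, brought in through (c), has to absorb it, and the harmonic--arithmetic mean bound from Lemma~\ref{lem:positive-universal} has to handle what remains.

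\emph{Consequence.} Given \eqref{eq:directional}, if $|\kappa_i|\le\theta H/q$ then $H+q|\kappa_i|\le(1+\theta)H$. Hence $2S_i/(H+q|\kappa_i|)\ge \frac{S_i}{H}\frac{2}{1+\theta}$, which is \eqref{eq:central-general}. Setting $\theta=1/3$ gives the coefficient $3/2$.
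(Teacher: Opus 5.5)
Your case $\kappa_i\le0$ is correct: since $H-q\kappa_i=H+q|\kappa_i|$, the first inequality of Lemma~\ref{lem:negative} is exactly \eqref{eq:directional}. Your derivation of \eqref{eq:central-general} is also correct. Both coincide with the paper.

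\textbf{The case $\kappa_i\ge0$ is not proved.} You defer the estimate $\Phi\le\frac12S_i(H+q\kappa_i)$, which is the whole content of the proposition. Moreover, the two-parameter family you propose cannot deliver it.

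First, (b) is misstated. The identity $S_iX=\sum_{I\ni i}\lambda_I^{-1}\big(T_I+\sum_{j\notin I}t_j\big)$ puts the coefficient $e_j:=\sum_{I\ni i,\,j\notin I}\lambda_I^{-1}$ on $t_j$, not $d_j$. Indeed $d_j$ has the wrong homogeneity.

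Second, the family has a structural problem. Written in the $T_J$-variables, (c) shares no coordinate with (a), which involves only $(t_j)_{j\ne i}$. At $\theta=1$, adding $\mu\cdot$(c) therefore just adds $\mu^2\binom np$ to the Cauchy--Schwarz constant; it cannot cancel $p\kappa_i\sum_{J\not\ni i}\lambda_J^{-1}$. In general $\mu$ interacts only with the $T_I$-coefficients of (b). Optimizing $\mu$ and the Cauchy--Schwarz weights, the best constant your family can give is
\[
\frac qn\binom{n-1}{p-1}(1-\theta)^2+\sum_{j\ne i}\frac{\big(\theta d_j(\kappa_j-\kappa_i)+(1-\theta)e_j\big)^2}{2d_j}.
\]
This fails concretely. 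Take $p=2$, $i=1$, and $\kappa=(1,\epsilon/2,\epsilon/2,L,\dots,L)\in\mathcal P_2$ with $m=n-3$ entries equal to $L$. Since $d_2(\kappa_2-\kappa_1)^2\sim1/\epsilon$, you are forced to $\theta\to0$. Letting $\epsilon\to0$ and then $L\to\infty$, the excess of this constant over $\frac12S_1(H+q\kappa_1)$ tends to
\[
\frac{m^2}4-\frac{3m}4-2+\frac2{m+3},
\]
which is positive for $n\ge8$.

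\textbf{The missing idea.} Your intuition that the $J\not\ni i$ squares must absorb the bad term is right, but the global constraint is the wrong tool for it. The paper proceeds as follows.
\begin{enumerate}
\item It uses the constraint only once, splitting (a) set by set: $S_iX=\sum_{J\not\ni i}\sum_{j\in J}\big(\lambda_{(J\setminus\{j\})\cup\{i\}}^{-1}-\lambda_J^{-1}\big)t_j$.
\item Separately for each $J\not\ni i$, it moves a fraction $\tau_J$ of $-\lambda_J^{-1}t_j$ into $-\tau_J\lambda_J^{-1}T_J$. This uses the trivial identity $T_J=\sum_{j\in J}t_j$, not the constraint.
\item Weighted Cauchy--Schwarz against $\lambda_J^{-2}T_J^2$ and the pieces $2t_j^2/(\lambda_J\lambda_{(J\setminus\{j\})\cup\{i\}})$ of the $d_j$-terms gives local costs $\Phi_J(\tau_J)$, which are minimized independently.
\item The optimal $\tau_J=(p\kappa_i/\lambda_J-1)/(p+1+p\kappa_i/\lambda_J)$ tends to $1$ when $\lambda_J$ is small, absorbing the $p\kappa_i/\lambda_J$ blow-up into the $J$-square. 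It tends to $-1/(p+1)$ when $\lambda_J$ is large.
\item One then bounds $\Phi_J(\tau_J)\le\frac12\big((\lambda_J+2\kappa_i)\sum_{j\in J}\lambda_{(J\setminus\{j\})\cup\{i\}}^{-1}-p\big)$ and sums to get $\frac12\big(S_i(H+q\kappa_i)-\binom{n-1}{p-1}\big)$.
\end{enumerate}
The $I\ni i$ squares, on which your (b) relies, are never used. The essential point is that the multiplier is chosen separately for each $J$.
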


\begin{proof}
The case $\kappa_i \leq 0$ follows from Lemma \ref{lem:negative}. Assume $\kappa_i \geq 0$.
First, we can derive the following identity:
\begin{equation}
\begin{aligned}
S_i \sum_j t_j
& = \sum_{j \neq i} (S_i - S_j) t_j \\
& = \sum_{J \not\ni i} \sum_{j \in J}
\bigg( \frac{1}{\lambda_{(J\setminus\{j\})\cup\{i\}}} - \frac{1}{\lambda_J} \bigg) t_j \\
& = \sum_{J \not\ni i} \sum_{j \in J}
\bigg( \frac{1}{\lambda_{(J\setminus\{j\})\cup\{i\}}} - \frac{1 - \tau_J}{\lambda_J} \bigg) t_j
- \sum_{J \not\ni i} \tau_J \frac{\sum_{j \in J} t_j}{\lambda_J}.
\label{eq:dual-identity}
\end{aligned}
\end{equation}
Applying weighted Cauchy--Schwarz inequality in \eqref{eq:dual-identity} gives
\begin{equation}
\begin{aligned}
& S_i^2 \bigg( \sum_j t_j \bigg)^2 \\
\leq & \sum_{J\not\ni i} \Bigg(  \tau_J^2 + \frac{1}{2} \sum_{j \in J} \lambda_J \lambda_{(J\setminus\{j\})\cup\{i\}}
\bigg( \frac{1}{\lambda_{(J\setminus\{j\})\cup\{i\}}} - \frac{1 - \tau_J}{\lambda_J} \bigg)^2 \Bigg) \cdot \\
 & \Bigg( \sum_{J\not\ni i} \frac{\big( \sum_{j \in J} t_j \big)^2}{\lambda_J^2}
+ 2 \sum_{J\not\ni i} \sum_{j\in J} \frac{t_j^2}{\lambda_J \lambda_{(J\setminus\{j\})\cup\{i\}}} \Bigg) \\
\leq & Q_i \sum_{J\not\ni i} \Phi_J (\tau_J),
\label{eq:dual-CS}
\end{aligned}
\end{equation}
where
\begin{equation}
\begin{aligned}
& \Phi_J (\tau)
= \tau^2 + \frac{1}{2} \sum_{j \in J} \lambda_J \lambda_{(J\setminus\{j\})\cup\{i\}}
\bigg( \frac{1}{\lambda_{(J\setminus\{j\})\cup\{i\}}} - \frac{1 - \tau}{\lambda_J} \bigg)^2 \\
&= \tau^2 + \frac{1}{2} \Bigg(
\lambda_J \sum_{j\in J} \frac{1}{\lambda_{(J\setminus\{j\})\cup\{i\}}} - 2 p (1 - \tau) + \bigg( p-1 + \frac{p \kappa_i}{\lambda_J} \bigg) (1-\tau)^2 \Bigg).
\end{aligned}
\label{eq:local-cost}
\end{equation}
The second equality uses
\[ \sum_{j \in J} \frac{\lambda_{(J\setminus\{j\})\cup\{i\}}}{\lambda_J} = p - 1 + \frac{p \kappa_i}{\lambda_J} > 0. \]
For each $J$, we minimize the local coefficient $\Phi_J (\tau)$:
differentiation gives
\[
\Phi_J'(\tau) = \bigg( p + 1 + \frac{p \kappa_i}{\lambda_J} \bigg) \tau + 1 - \frac{p \kappa_i}{\lambda_J}, \qquad
\Phi_J''(\tau) = p + 1 + \frac{p \kappa_i}{\lambda_J} > 0.
\]
Thus the choice
\begin{equation}
\tau_J = \frac{\frac{p \kappa_i}{\lambda_J} - 1}{p + 1 + \frac{p \kappa_i}{\lambda_J}}
\label{eq:optimal-t}
\end{equation}
minimizes $\Phi_J$. Completing the square in \eqref{eq:local-cost} gives
\begin{equation}
\begin{aligned}
\Phi_J (\tau_J)
& = \frac{1}{2} \Bigg( \lambda_J \sum_{j\in J} \frac{1}{\lambda_{(J\setminus\{j\})\cup\{i\}}} - p - 1 + \frac{p \kappa_i}{\lambda_J}
- \frac{\big( \frac{p \kappa_i}{\lambda_J} - 1 \big)^2}{p + 1 + \frac{p \kappa_i}{\lambda_J}} \Bigg) \\
& = \frac{1}{2} \Bigg( \lambda_J \sum_{j\in J} \frac{1}{\lambda_{(J\setminus\{j\})\cup\{i\}}} + 2 -
\frac{(p + 2)^2}{p + 1 + \frac{p \kappa_i}{\lambda_J}} \Bigg) \\
& \leq \frac{1}{2} \bigg( (\lambda_J + 2 \kappa_i) \sum_{j\in J} \frac{1}{\lambda_{(J\setminus\{j\})\cup\{i\}}} - p \bigg).
\label{eq:dual-cost}
\end{aligned}
\end{equation}
The last inequality is true in view of
\begin{equation*}
\begin{aligned}
& 2 - \frac{(p+2)^2}{p+1+z} \leq \frac{2 p z}{p - 1 + z} - p \qquad \text{ for } z \geq 0,
\end{aligned}
\end{equation*}
and the harmonic--arithmetic mean inequality
\[ \frac{p \frac{p \kappa_i}{\lambda_J}}{p - 1 + \frac{p \kappa_i}{\lambda_J}} \leq
\kappa_i \sum_{j\in J} \frac{1}{\lambda_{(J\setminus\{j\})\cup\{i\}}}.
\]

To estimate $\sum_{J\not\ni i} \Phi_J (\tau_J)$, we note that
\begin{gather*}
\sum_{J \not\ni i} \sum_{j\in J} \frac{1}{\lambda_{(J\setminus\{j\})\cup\{i\}}} = q S_i, \qquad p \binom{n-1}{p} = q \binom{n-1}{p-1},
\end{gather*}
and
\begin{equation*}
\begin{aligned}
& \sum_{J \not\ni i} \lambda_J \sum_{j\in J} \frac{1}{\lambda_{(J\setminus\{j\})\cup\{i\}}}
= \sum_{I \ni i} \frac{1}{\lambda_I} \sum_{j \not\in I} \lambda_{{(I\setminus\{i\})\cup\{j\}}} \\
= & \sum_{I \ni i} \frac{H - q \kappa_i + (q - 1) \lambda_I}{\lambda_I} = S_i (H - q \kappa_i) + (q - 1) \binom{n-1}{p-1}.
\end{aligned}
\end{equation*}
Thus \eqref{eq:dual-cost} implies
\begin{equation}
\sum_{J\not\ni i} \Phi_J (\tau_J) \leq \frac{1}{2} \bigg( S_i (H + q \kappa_i) - \binom{n-1}{p-1} \bigg)
\leq \frac{1}{2} S_i (H + q \kappa_i).
\label{eq:total-dual-cost}
\end{equation}
Combining this bound with \eqref{eq:dual-CS} proves \eqref{eq:directional}. Substituting the condition $|\kappa_i| \leq \theta H/q$ gives \eqref{eq:central-general}.

For strictness and degenerate cases, we note that
the direction-dependent inequality \eqref{eq:directional} is also strict for nonzero $t$ satisfying $\sum_j S_j t_j = 0$. For $\kappa_i \leq 0$ this follows from the preceding strict inequality in \eqref{eq:Si}.
For $\kappa_i \geq 0$ and $\sum_j t_j \neq 0$, \eqref{eq:dual-CS} and \eqref{eq:total-dual-cost} give
\[
S_i^2 \bigg( \sum_j t_j \bigg)^2 \leq Q_i \sum_{J \not\ni i} \Phi_J (\tau_J)
\leq \frac{1}{2} \bigg( S_i (H + q \kappa_i) - \binom{n-1}{p-1} \bigg) Q_i
< \frac{1}{2} S_i (H + q \kappa_i) Q_i.
\]
If $\sum_j t_j = 0$ and $t \neq 0$, the strict inequality follows from the positive definiteness of $Q_i$.
\end{proof}

Combining Proposition \ref{prop:universal} and Proposition \ref{prop:directional} gives
\begin{equation}
Q_i \geq \frac{S_i}{H} \max\bigg\{\frac{2}{p},\frac{2H}{H+q|\kappa_i|}\bigg\} \bigg( \sum_j t_j \bigg)^2
\qquad\text{whenever } \sum_j S_j t_j = 0
\label{eq:combined-coercivity}
\end{equation}
at every admissible curvature vector, including those with all components equal.
The coefficients give explicit lower bounds for the optimal coefficients $\frac{H \mathcal{Q}_i}{S_i}$ wherever it is defined; equality is not asserted.

\vspace{4mm}

\section{Uniform global curvature estimate}
\label{sec:first-absorption}

We continue our proof of Theorem \ref{thm:main} from \eqref{eq: lead-inequality}, which can be rewritten as
\begin{equation} \label{eq:lead-inequality}
\begin{aligned}
& (\beta - 1) \bigg( \sum_i \frac{S_i}{S} + \sum_i \frac{S_i}{S} \kappa_i^2 \bigg)  +  \frac{\sum_i Q_i}{H S}
\\ &
+ 2 \beta \sum_i \frac{S_i}{S} \frac{u_i^2}{u^2} \frac{\kappa_i - \nu^{n+1}}{\nu^{n + 1}} - \beta (\beta - 1) \sum_i \frac{S_i}{S} \frac{u_i^2}{u^2} \bigg( \frac{\kappa_i - \nu^{n + 1}}{\nu^{n + 1}} \bigg)^2 < 0.
\end{aligned}
\end{equation}

First, we give an estimate for
$\sum \frac{S_i}{S} \kappa_i^2$.
By \eqref{eq:universal} and \eqref{eq4-6}, we have
\begin{equation} \label{Fact0}
\frac{Q_i}{H S} \geq \frac{2}{p} \frac{S_i}{S} \bigg( \frac{H_i}{H} \bigg)^2 = \frac{2 \beta^2}{p} \frac{S_i}{S} \frac{u_i^2}{u^2} \bigg( \frac{\kappa_i - \nu^{n + 1}}{\nu^{n + 1}} \bigg)^2.
\end{equation}
Moreover, Cauchy--Schwartz gives
\[ \begin{aligned}
\bigg| \sum \frac{S_i}{S} \frac{u_i^2}{u^2} \frac{\kappa_i - \nu^{n+1}}{\nu^{n + 1}} \bigg|
\leq & \sqrt{\sum_i \frac{S_i}{S} \frac{u_i^2}{u^2}} \sqrt{\sum_i \frac{S_i}{S} \frac{u_i^2}{u^2} \bigg( \frac{\kappa_i - \nu^{n + 1}}{\nu^{n + 1}} \bigg)^2} \\
\leq & \sqrt{\sum_i \frac{S_i}{S} \frac{u_i^2}{u^2} \bigg( \frac{\kappa_i - \nu^{n + 1}}{\nu^{n + 1}} \bigg)^2}.
\end{aligned}
\]
The last inequality is true in view of \eqref{eq:weight-sums} and \eqref{eq2-2}:
\begin{equation} \label{eq2-2-2}
\sum_i \frac{u_i^2}{u^2} = 1 - (\nu^{n + 1})^2.
\end{equation}
Hence \eqref{eq:lead-inequality} reduces to
\begin{equation} \label{eq: estimate-whole}
\begin{aligned}
& (\beta - 1) \bigg( p + \sum \frac{S_i}{S} \kappa_i^2 \bigg) + \bigg( \frac{2 \beta^2}{p} - \beta (\beta - 1) \bigg) \sum_i \frac{S_i}{S} \frac{u_i^2}{u^2} \bigg( \frac{\kappa_i - \nu^{n + 1}}{\nu^{n + 1}} \bigg)^2
\\ &
- 2 \beta \sqrt{\sum_i \frac{S_i}{S} \frac{u_i^2}{u^2} \bigg( \frac{\kappa_i - \nu^{n + 1}}{\nu^{n + 1}} \bigg)^2} < 0.
\end{aligned}
\end{equation}
Requiring
\begin{equation} \label{Condition1}
 \frac{2 \beta^2}{p} - \beta (\beta - 1) > 0,
\end{equation}
and completing the square in \eqref{eq: estimate-whole} we obtain
\begin{equation} \label{rough-estimate}
\sum \frac{S_i}{S} \kappa_i^2
< \frac{4 \beta^2}{4 (\beta - 1) \bigg( \frac{2 \beta^2}{p} - \beta (\beta - 1) \bigg)} -  p.
\end{equation}
We further require that
\begin{equation} \label{Condition2}
 \frac{\beta^2}{(\beta - 1) \bigg( \frac{2 \beta^2}{p} - \beta (\beta - 1) \bigg)} >  p.
\end{equation}
Choosing
\[ \beta = \frac{p + 1}{p} > 1, \]
which satisfies both \eqref{Condition1} and \eqref{Condition2}, the estimate \eqref{rough-estimate} becomes
\begin{equation} \label{Rough-estimate}
\sum \frac{S_i}{S} \kappa_i^2
< \frac{p (p^2 - 2)}{p + 2} < p^2.
\end{equation}

By Lemma \ref{lem:elementary}, $\frac{S_n}{S} \geq \frac{p}{n}$. Togetherw ith \eqref{Rough-estimate}, this yields
\[
\frac{p}{n} \kappa_n^2 \leq \sum_i \frac{S_i}{S} \kappa_i^2 < p^2,
\]
so for any $i$,
\begin{equation} \label{lower-bound}
\kappa_i \geq \kappa_n >  -\sqrt{np}.
\end{equation}

Next, we come back to \eqref{eq:lead-inequality} to derive the uniform global curvature estimate.
For the index $i$ such that $|\kappa_i| \geq \frac{H}{3q}$,
suppose $\frac{H}{3 q} \geq \sqrt{np}$ (otherwise we are done).
Then
\begin{equation} \label{Fact1}
\kappa_i \geq \frac{H}{3q} \geq \sqrt{np} > 1 \geq \nu^{n + 1}.
\end{equation}
Meanwhile, by \eqref{Fact0} we have
\begin{equation} \label{Fact2}
\begin{aligned}
& \frac{Q_i}{H S} - \beta (\beta - 1) \frac{S_i}{S} \frac{u_i^2}{u^2} \bigg( \frac{\kappa_i - \nu^{n + 1}}{\nu^{n + 1}} \bigg)^2 \\
\geq & \bigg( \frac{2 \beta^2}{p} - \beta (\beta - 1) \bigg) \frac{S_i}{S} \frac{u_i^2}{u^2} \bigg( \frac{\kappa_i - \nu^{n + 1}}{\nu^{n + 1}} \bigg)^2 \geq 0.
\end{aligned}
\end{equation}

For an index $i$ with $|\kappa_i| \leq \frac{H}{3q}$,  Proposition \ref{prop:directional} gives
\begin{equation*}
\begin{aligned}
& \frac{Q_i}{H S} - \beta (\beta - 1) \frac{S_i}{S} \frac{u_i^2}{u^2} \bigg( \frac{\kappa_i - \nu^{n + 1}}{\nu^{n + 1}} \bigg)^2
\geq  \bigg( \frac{3 \beta^2}{2} - \beta (\beta - 1) \bigg) \frac{S_i}{S} \frac{u_i^2}{u^2} \bigg( \frac{\kappa_i - \nu^{n + 1}}{\nu^{n + 1}} \bigg)^2.
\end{aligned}
\end{equation*}
Then completing the square in $\frac{\kappa_i - \nu^{n + 1}}{\nu^{n + 1}}$ yields
\begin{equation}
2 \beta \frac{S_i}{S} \frac{u_i^2}{u^2} \frac{\kappa_i - \nu^{n+1}}{\nu^{n + 1}} + \frac{Q_i}{H S} - \beta (\beta - 1) \frac{S_i}{S} \frac{u_i^2}{u^2} \bigg( \frac{\kappa_i - \nu^{n + 1}}{\nu^{n + 1}} \bigg)^2
\geq -\frac{2(p+1)}{3p+1} \frac{S_i}{S} \frac{u_i^2}{u^2}.
\label{eq:central-square}
\end{equation}

Let $\mathcal{C}$ be the set of indices $i$ such that $|\kappa_i| \leq \frac{H}{3q}$. We prove the nonnegativity of the sum of the $i \in \mathcal{C}$ layers along with the terms
\[ (\beta - 1) \frac{S_i}{S} \]
with $i\notin\mathcal{C}$. Applying \eqref{eq:central-square} to this sum we obtain
\begin{equation} \label{Fact3}
\begin{aligned}
&  (\beta - 1) \sum_i \frac{S_i}{S} +  \sum_{i \in \mathcal{C}} \frac{Q_i}{H S} \\
& + 2 \beta \sum_{i \in \mathcal{C}} \frac{S_i}{S} \frac{u_i^2}{u^2} \frac{\kappa_i - \nu^{n+1}}{\nu^{n + 1}} - \beta (\beta - 1) \sum_{i \in \mathcal{C}} \frac{S_i}{S} \frac{u_i^2}{u^2} \bigg( \frac{\kappa_i - \nu^{n + 1}}{\nu^{n + 1}} \bigg)^2 \\
\geq & 1 - \sum_{i \in \mathcal{C}} \frac{2(p+1)}{3p+1} \frac{S_i}{S} \frac{u_i^2}{u^2}
\geq  1 - \frac{2(p+1)}{3p+1} = \frac{p - 1}{3 p + 1} > 0.
\end{aligned}
\end{equation}
The last inequality is true by \eqref{eq:weight-sums} and \eqref{eq2-2-2}.
Now we apply \eqref{Fact1}, \eqref{Fact2} and \eqref{Fact3} to \eqref{eq:lead-inequality} to reach a contradiction. We thus finish the proof of Theorem \ref{thm:main}.

\begin{proof}[Proof of Theorem~\ref{thm:plateau}]
Lemma~\ref{lem:structure}, in particular \eqref{eq:ray-growth}, verifies the structural hypotheses of the Guan--Spruck approximation theorem \cite[Theorem~1.3]{GS10}. For every sufficiently small $\epsilon>0$,
that theorem supplies a smooth admissible solution $u^\epsilon$
of \eqref{eq1-4}, together with its height and angle estimates.
By the boundary second-derivative estimate \cite[Theorem~5.1]{GS10} and the graph formula \eqref{eq:graph-matrix}, the hyperbolic principal curvatures on the finite boundary are bounded independently of $\epsilon$. Theorem~\ref{thm:main}
therefore gives the uniform bound on the whole graph. Positive height lower bounds on compact subsets of $\Omega$ allow the compactness argument of \cite[Section~4]{GS10} to produce a smooth admissible limit as $\epsilon\downarrow0$. The boundary angle and $C^1$ regularity of $u$, together with the $C^{1,1}$ regularity of $u^2$, follow from the barrier and boundary estimates in \cite[Sections~3--5]{GS10}. The admissible comparison argument there gives uniqueness.
The global bound for $u|D^2u|$ follows from
\eqref{eq:graph-matrix}, the curvature bound and the angle estimate.
The graph is complete because its induced metric is
$u^{-2}(I_n+Du\otimes Du)$ and $u$ vanishes continuously on
$\partial\Omega$. The same boundary behavior makes the embedding
proper in $\mathbb H^{n+1}$ and identifies its asymptotic boundary
with $\partial\Omega\times\{0\}$.
\end{proof}

\medskip
\noindent\textbf{Acknowledgements.}\quad
The first author was supported by the Natural Science Foundation of Heilongjiang Province (Grant No. PL2025A006) and Program for Young Talents of Basic Research in University of Heilongjiang Province (Grant No. YQJH2025116). The second author was supported by National Natural Science Foundation of China (No. 12571212). The authors acknowledge the use of AI tools. All mathematical statements and proofs were independently verified by the authors, who take full responsibility for the content of the manuscript.

\vspace{4mm}
\noindent\textbf{Conflict of interest.}\quad The authors declare that there is no conflict of interest.

\vspace{4mm}
\noindent\textbf{Data availability.}\quad Data sharing is not applicable to this article because no datasets were generated or analyzed during the current study.

\end{document}